\newtheorem{thm}{Theorem}[section]
\newtheorem{la}[thm]{Lemma}
\newtheorem{prop}[thm]{Proposition}
\newtheorem{cor}[thm]{Corollary}
\theoremstyle{definition}
\newtheorem{defn}[thm]{Definition}
\newtheorem{exa}[thm]{Example}
\newtheorem{coexa}[thm]{Counter example}
\newtheorem{rem}[thm]{Remark}
\newcommand{\calm}{\mathcal M}
\newcommand{\N}{{\mathbb N}}
\newcommand{\R}{{\mathbb R}}
\newcommand{\Z}{{\mathbb Z}}
\newcommand{\C}{{\mathbb C}}
\newcommand{\newLa}{\mathcal{L}}
\DeclareMathOperator{\BnewLa}{\mathbf{\newLa}}
\DeclareMathOperator{\id}{id}
\newcommand{\coloneq}{\colonequals}
\DeclareMathOperator{\Supp}{Supp}
\DeclareMathOperator{\pr}{pr}
\DeclareMathOperator{\ev}{ev}
\DeclareMathOperator{\reg}{reg}
\DeclareMathOperator{\Indep}{Indep}
\DeclareMathOperator{\Dep}{Dep}
\newcommand{\Frechet}{Fr\'{e}chet}
\date{}
\newcommand{\Poly}{\mathcal P}
\newcommand{\Hol}{\operatorname{Hol}}
\newcommand{\BHol}{\operatorname{BHol}}
\newcommand{\Holo}{\mathcal{H}}
\newcommand{\CHolok}{\Holo^{\C}(\C^k)}
\newcommand{\RHolok}{\Holo(\C^k)}
\newcommand{\CHoloN}{\Holo^{\C}(\C^\N)}
\newcommand{\RHoloN}{\Holo(\C^\N)}
\newcommand{\Mer}{\operatorname{Mer}}
\newcommand{\Mero}{\mathcal{M}}
\newcommand{\CMerok}{\Mero_{\newLa}^{\C}(\C^k)}
\newcommand{\RMerok}{\Mero_{\newLa}(\C^k)}
\newcommand{\polarMerok}{\Mero_{\newLa,Q}^-(\C^k)}
\newcommand{\RMeroN}{\Mero_{\BnewLa}(\C^\N)}
\newcommand{\polarMeroN}{\Mero_{\BnewLa,Q}^-(\C^\N)}
\begin{document}
  \title{A  topological splitting of  the space of meromorphic germs in several variables  and continuous  evaluators}  
  \author{Rafael Dahmen, Sylvie Paycha and Alexander Schmeding}
  \maketitle
  \abstract{ We prove a topological decomposition of the space of meromorphic germs at zero in several variables with prescribed linear poles as a sum of  spaces of holomorphic and polar germs. Evaluating the resulting  holomorphic projection at zero gives rise to  a continuous evaluator (at zero) on the space of meromorphic germs  in several variables. Our constructions are  carried out in the framework of Silva spaces and use an inner product on the underlying space of variables. They generalise to several variables, the topological direct decomposition of  meromorphic germs at zero  as sums of holomorphic and polar germs previously derived by the first and third author and provide a topological refinement of a known algebraic decomposition of such spaces previously derived by the second author and collaborators.}

  \textbf{MSC2020}: 32A70 (primary); 
  32A20, 
  46E50, 
  81T15 (Secondary)
  \\[.25em]
  
  \textbf{Keywords}: Germs of meromorphic functions, meromorphic functions in several variables, Silva space, evaluators, polar germ, linear pole, minimal subtraction scheme
  
  \tableofcontents

 \section*{Introduction}\addcontentsline{toc}{section}{Introduction}
  Meromorphic functions in several variables with linear poles, so functions on $\C^k$ of the type 
  \[
     f(z_1, \cdots, z_k)=\frac{h(z_1, \cdots, z_k)}{L_1(z_1, \cdots, z_k)\,\cdots \, L_m(z_1, \cdots, z_k) }, \quad k\in \N, m\in \Z_{\geq 0},
  \]
  where $h$ is a holomorphic function  and $L_1, \cdots, L_m$ are linear forms with real coefficients on $\C^k$, are ubiquous in mathematics and physics. 
  They arise in quantum field theory from Feynman integrals \cite{Speer,DZ,CGPZ2}, in number theory from multizeta functions, see e.g. \cite{MP}, in equivariant geometry from discrete Laplace transforms on polytopes  \cite{B,BV}, which are in turn related to discrete Laplace transforms on convex polyhedral cones \cite{GPZ2}.

  Depending on the context, the meromorphic functions have a specific type of linear poles e.g. $L_j(z_1, \cdots, z_k)
  =z_1+\cdots +z_j, 0\leq j\leq k$ in the context of multiple zeta functions, $L_J(z_1, \cdots, z_k)
  =\sum_{i\in J}z_i$ for subsets $J\subseteq \{1, \cdots, k\}$ in the context of Feynman integrals. In all the situations mentioned above, one can decompose the algebra $\RMerok $
  of meromorphic germs at zero on $\C^k$  (with real coefficients) and with a prescribed type of $\R$-linear poles given by a generating set $\newLa$. For this one chooses a suitable inner product $Q$ on $\C^k$ and obtains
  \begin{equation}\label{eq:splitting}
    \RMerok = \RHolok  \oplus \polarMerok 
  \end{equation} 
  as a direct sum of the algebra $\RHolok$ of holomorphic germs at zero, and a space $\polarMerok$ of polar germs at zero.    Such a decomposition was derived in \cite{BV} and \cite{GPZ} by means of an euclidean structure inner product $Q_k$ on the underlying spaces $\R^k$ with $k\in \N$. 
  This leads us to our first theorem (which results from Theorem \ref  {thm:MLambdaCN} and Proposition \ref{prop:piQ}):
  
  \smallskip\textbf{Theorem A.}  \emph{For every $k\in\N$, every index set $\newLa$ and suitable inner product $Q$ on $\C^k$ the space of meromorphic germs $\RMerok$ carries a natural topology such that it splits
  \[
     \RMerok = \RHolok  \oplus \polarMerok
  \]
  as a locally convex topological vector space. Here, the space $\RHolok$ is endowed with its natural topology.  In other words, the resulting projection $\pi_Q^k: \RMerok\to\RHolok$ onto $\RHolok$ parallel to $\polarMerok$ is continuous for any $k$ in $\N$. 
  }\smallskip 
  
  The decomposition is carried out in the context of Silva spaces, which we recall in the next section.
  By a limiting argument, the same holds for the germs on the infinite dimensional space $\C^\N$.
    Depending on the chosen inner product $Q$ on $\C^\N$, one can define a natural notion of orthogonality $\top_Q$  on the space of meromorphic germs.
  
  \smallskip\textbf{Theorem B.}
  \emph{
  Denoting by $\ev_0$ evaluation of holomorphic germs in $0$ and by $\pi_Q$ the projection induced by the inner product $Q$, one defines a map
  \[
     {\mathcal E}_Q^{\mathrm{MS}}\coloneq \ev_0\circ \pi_Q:\RMeroN\to \C.
  \]
  Then ${\mathcal E}_Q^{\mathrm{MS}}$ is continuous linear and partially multiplicative in the following sense
  \[
     f_1 \,\top_Q\, f_2\Longrightarrow {\mathcal E}_Q^{\mathrm{MS}}(f_1\cdot f_2)= {\mathcal E}_Q^{\mathrm{MS}}(f_1)\cdot {\mathcal E}_Q^{\mathrm{MS}}(f_2).
  \]
  } 
  
  We check that the resulting convergence for meromorphic germs arising from Feynman integrals indeed coincides with the one required by Speer in his pioneering work on analytic renormalisation  \cite{Speer}. In fact, the map ${\mathcal E}_Q^{\mathrm{MS}}$ in Theorem B is a continuous evaluator in the sense of Speer.

  \subsection*{A natural topological framework: Silva spaces}
  In order to endow the space of meromorphic germs at zero with  prescribed linear poles with  a topology, we need to make more precise the notion of  algebra of meromorphic germs at zero with prescribed linear poles. Given a positive integer $k$, from a countable set $\newLa$ of linear mappings $\C^k \rightarrow \C$ which does not contain the zero-map, we build  the semi-group (without unit)
  \[
     \mathcal{S}_{\newLa} \coloneq \{L_1 \cdot L_2 \cdots L_m \mid L_1, \ldots , L_m \in {\newLa} , m \in \N \}
  \]
  generated by the set $\newLa$. 
  For an element $P \colon \C^k \rightarrow \C$ of $\mathcal{S}_{\newLa}$, and $n\in \N$ we define the space
  \[
     \Mer_{P}^{\C} (B_{1/n}(\C^k)) \coloneq \left\{\frac{f}{P} \middle| f \in \BHol^{\C}(B_{1/n}(\C^k))\right\},
  \]
  where $B_{1/n}(\C^k) \coloneq \{z \in \C^k \mid \lVert z \rVert < 1/n\}$ is the (open) ball of radius $1/n$ in $\C^k$ and $\BHol^\C (U)$ is the space of bounded holomorphic functions on the set $U$. We obtain an inductive system $\left(\Mer_{P}^{\C}(B_{1/n}(\C^k))\right)_{P\in \mathcal S({\newLa}), n\in \N}$ of Banach spaces and  
  Proposition \ref{prop:lcxv-alg} states that
  the space of meromorphic functions on $\C^k$ with linear poles in $\newLa$ defined as the locally convex inductive limit 
  \[
     \CMerok \coloneq \lim_{\longrightarrow} \Mer_{P}^{\C} (B_{1/n}(\C^k))
  \]
  is a complex Silva space. We refer the reader to \cite{MR97702} for the concept of Silva spaces, also called DFS-space in the literature, which form a class of well-behaved locally convex spaces. One particularly important property of these spaces is that continuity of a mapping on a Silva space can be checked on the steps of the inductive limit (whence continuity can often be reduced to a problem involving only the more familiar setting of Banach spaces). Other remarkable properties which we shall use in the sequel are that countable limits and closed linear subspaces of Silva spaces are again Silva spaces, \cite{MR97702}.

  In the following, we shall restrict to meromorphic functions in $ \CMerok $ which take real values on  $\R^k$, therefore dropping the superscript $\C$ in the notations, writing $\RMerok$ and  $ \RHolok$.

  \subsection*{A topological splitting: main results}
  
  Let us now describe the results of the paper, leading to a topological version of  (\ref{eq:splitting}) for fixed $k$.
  A first step towards a topological splitting is Proposition  \ref{prop:lcxv-alg}, which shows that $\RMerok$ and  $ \RHolok$ are Silva spaces. 
  
  Endowing $\C^k$ with an inner product $Q$  such  that $Q|_{\R^k \times \R^k}$ takes values in $\R$, we consider the space $\polarMerok$  of polar germs (Definition \ref{defn:suppcones}) with a prescribed type of poles determined by the generating set $\newLa$.
  
  Our first main result is Theorem  \ref{polargerms_closed}, which   shows that the space $\polarMerok$ is a closed linear subspace of $\RMerok$. 
  This leads to  Theorem \ref{thm:splittingspaces} where we prove a topological splitting of $\RMerok$ as the topological direct sum of two locally convex spaces, the space $ \RHolok$ of  holomorphic functions and the space
  $\polarMerok$ generated by polar germs:
  \begin{equation}\label{eq:splittingMer}
    \RMerok = \sum_{P\in \mathcal{S}_{\newLa}}\RHolok\cdot P^{-1} = \RHolok \oplus \polarMerok.
  \end{equation} 
  It is surprisingly subtle to obtain the splitting of the inductive limit. While there are splitting operators on the level of the steps of the limit (cf.\ Lemma \ref{lem:splittingoff}), these are not amenable to an inductive argument.  
  
  We then consider the inductive limit of the above spaces and the corresponding splittings as $k\to \infty$. The projection $\mathrm{pr}_k\colon \C^{k+1} \rightarrow \C^k$ onto the first $k$ components yields a projective system of bonding maps which describe the locally convex direct product $\C^\N$. For each $k$ in $\N$, we assume that $L \circ \mathrm{pr}_k $ lies in $\newLa_{k+1}$ if $L$ is in $\newLa_k$ and define $\BnewLa = \bigcup_{k \in \N}\newLa_k$. This gives rise to an inductive system of locally convex spaces which allows us to construct an (locally convex) inductive limit 
  \begin{align*}
    \RMeroN &= \{f \colon \C^\N \rightarrow \C \mid \exists k \in \N, f_k \in \RMerok \text{ such that } f = f_k \circ \pi_k^\N \}\\
    &=\lim_{\longrightarrow} \RMerok .
  \end{align*} 
  Thanks to the stability of the class of Silva spaces under inductive limits, Propositions \ref{prop:HSilva} and \ref{prop:MLambdainfty} yield that the inductive limits $\RHoloN$ and
  $\RMeroN$ are both Silva spaces. 
  
  Putting all the ingredients together, leads to our main result in Theorem \ref{thm:MLambdaCN}, namely a refinement of the  algebraic splitting of $\RMeroN$ 
  derived in \cite{BV,GPZ}, resulting from an inductive limit of the splittings (\ref{eq:splittingMer}), to a topological splitting of locally convex spaces. Explicitly, by means of a  family $Q = (Q_k)_{k\in \N}$ of inner products $Q_k$ on $\C^k$, we decompose $\RMeroN$, which by Proposition \ref{prop:MLambdainfty} is a Silva space, as the locally convex sum 
  \[
     \RMeroN = \RHoloN \oplus \polarMeroN, \  \text{where } \RHoloN = \underset{\longrightarrow }{\lim}\RHolok
  \]
  is the Silva space of germs of holomorphic functions.

  \subsection*{A topological minimal subtraction scheme}
  
  As a consequence of the topological splitting, the canonical projection  $\pi_Q\colon \RMeroN\longrightarrow\RHoloN$ induced by the splitting 
  is a continuous linear map with a partial multiplicative property (\ref{eq:partialmult}).
  
  The canonical projection combined with the evaluation at zero $\ev_0$  to the continuous linear form (see Theorem \ref{thm:ev})
  \[
     \mathcal{E}^{\text{MS}}_Q:= \ev_0\circ \pi_Q,
  \]
  acting on meromorphic germs at zero.
  
  In one variable, the projection $\pi_Q$ amounts to deleting the principal part from the Laurent series:
  \[
     \pi_+\left(\sum_{k=-K}^\infty a_kz^k\right) = \sum_{k=0}^\infty a_kz^k,\quad K\in \Z_{\geq 0}
  \]
  which  gives rise to the map  $\mathrm{ev}_0\circ \pi_+$ underlying the minimal subtraction  scheme used (in various disguises) in quantum field theory, a method to extract finite parts from a priori divergent expressions,  which goes back to   \cite{THooft73} and \cite{Wein73}. The map  $\mathcal{E}_Q^{\text{MS}}=\mathrm{ev}_0\circ \pi^Q$, which we somewhat abusively refer to   as a \textbf{  minimal subtraction scheme},  is a particular instance of  a generalised $Q$-evaluator (see Definition \ref{defn:evaluator}), a definition inspired by Speer's generalised evaluators \cite{Speer} and used previously in work (in collaboration) by one of the authors \cite{GPZ11}. 
  
  In the present article we show that the minimal subtraction scheme is  topological in the sense that $\mathcal{E}^{\text{MS}}_Q$ is continuous. This gives a precise interpretation in the framework of $Q$-evaluators, of the continuity assumption Speer requires of his generalised evaluators.  {  The partial multiplicativity required in \eqref{eq:multFQ} of these operators can be interpreted in the  locality setup developed in \cite{CGPZ},  as a locality character property, with the locality relation given by the graph by the binary relation $\perp_Q$. Hence our results call for a generalisation of the algebraic locality setup of  \cite{CGPZ} to a topological locality setup, by which the locality is required to be compatible with the ambient topology. This raises interesting and challenging questions such as how to enhance locality tensor products on vector spaces (discussed in \cite{CFLP}) to locality tensor products on topological vector spaces. } 
  
  An open question is  the classification of continuous $Q$-evaluators on certain classes meromorphic germs.
  In work in progress  by the second author and collaborators, it is shown that  modulo the action of what the authors call the $Q$-Galois group, generalised $Q$-evaluators on certain classes of meromorphic germs are given by the $Q$-minimal subtraction scheme $\mathcal{E}_Q^{\text{MS}}$, thus showing the importance of the latter.\medskip

  
  \begin{figure}[h]
    \caption{\textbf{Notations for\ldots}}
    \begin{tabularx}{\textwidth}{@{}XX@{}}
      \toprule
      \textbf{Spaces of functions} \\
      $\Hol^\C (U)$ & holomorphic functions on an open set $U$ with values in $\C$\\
      $\Hol (U)$ & The real subspace of all elements in $\Hol^\C (U)$ mapping points in $\R^k$ to $\R$ \\
      $\BHol^\C (U)$ & Bounded holomorphic functions on an open set $U$ with values in $\C$\\
      $\BHol (U)$ & The real subspace of all elements in $\BHol^\C (U)$ mapping points in $\R^k$ to $\R$ \\
      $\Mer_P^\C (U)$ & Meromorphic functions on an open set $U$ with values in $\C$ and prescribed polynomial pole $P$\\
      $\CMerok $ & $\C$-valued germs of meromorphic functions in $0 \in \C^k$ with poles generated by $\newLa$ \\
      $\RMerok$ & The real subspace of all elements in $\CMerok$ mapping points in $\R^k$ to $\R$\\
      $\CHolok $ & $\C$-valued germs of holomorphic functions in $0\in \C^k$\\  
      $\RHolok \subseteq  \RMerok$ & $\C$-valued germs of holomorphic functions in $0\in \C^k$ mapping points in $\R^k$ to $\R$\\  
      $\polarMerok$ & Subspace of $\RMerok$ spanned by the polar germs\\
      &\\
      \textbf{Parameters}\\
      $\newLa$  & generating set of linear poles\\
      $\mathcal{S}_\newLa$ & semi-group generated by $\newLa$\\
      $Q_k$ & Inner product on $\C^k$\\ 
      &\\
      \textbf{Sets}
      \\
      $[[1,m]]$  &  The set of integers from $1$ to $m$ \\
      $\Dep(f)$ (resp.  Indep$(f)$) &  Dependence (resp. Independence) subspace of a germ/function\\ 
      $B_{r}(\C^k)$ & The open ball in $\C^k$ of radius $r$ centered at $0$ for the norm $\Vert\cdot \Vert = \sqrt{Q_k (\cdot,\cdot)}$ \\
      \bottomrule
    \end{tabularx}
  \end{figure}
  \FloatBarrier
  \newpage
 \section{Meromorphic germs on $\C^k$ as a Silva space}\label{sect:meroCk}
  In this section we construct the locally convex algebra of meromorphic germs with linear poles \cite{GPZ}. Let us first fix some notations and conventions.\smallskip
  
  \textbf{Notations}
  We set $\N = \{1,2,3,\ldots \}$ and let $k \in \N$. We choose and fix a sesquilinear inner product  $Q \colon \C^k \times \C^k \rightarrow \C$, linear in the first component. For later purposes we shall also assume that $Q|_{\R^k \times \R^k}$ takes values in $\R$.
  We will then write $\lVert \cdot \rVert = \sqrt{Q(\cdot,\cdot)}$ for the associated norm and $B_{r}(\C^k) \coloneq \{z \in \C^k \mid \lVert z \rVert < r\}$ the ball of radius $r>0$ in $\C^k$. Note that with these conventions we have $B_r(\C^k) \cap \R^k = B_r (\R^k)$. For a natural number $m\in\N$ we use the notation $[[1,m]]\coloneq\{1,\ldots,m\}$.
  
  The inner product $Q$  induces an inner product on the dual space $(\C^k)^* $ defined by  
  \begin{equation}\label{duality:scal}
    Q^*(L_1, L_2)\coloneq Q(L_1^*, L_2^*)\quad \forall L_1, L_2\in(\C^k)^*.
  \end{equation}
  The idea will be to construct the space of meromorphic functions as a certain limit of Banach spaces known as a Silva space.
  As these spaces are central to all that is following, we recall now basic facts on Silva spaces.
  
  \subsection{Preliminaries on Silva spaces}\label{rem:Silva}
  
  Let $(E_k,\iota_{k,k+1})_{k\in \N}$ be an inductive system of Banach spaces with compact linear bonding mappings\footnote{A continuous linear mapping between Banach spaces is called compact if it maps bounded subsets to relatively compact subsets.} $\iota_{k,k+1} \colon E_k \rightarrow E_{k+1}$. Then the inductive locally convex limit
  \[
     E \coloneq \lim_{\stackrel{\longrightarrow}{k} } E_k
  \]
  exists, i.e.\ is again a Hausdorff locally convex space. In the following we will suppress the index of inductive limits whenever there is no ambiguity.\smallskip
  
  Spaces arising as inductive limits of Banach spaces with compact bonding maps are called \textbf{Silva spaces} or (DFS)-spaces (where DFS stands for (strong) dual \Frechet-Schwartz, as it can be shown that they are precisely the strong duals of \Frechet-Schwartz spaces (see \cite{MR97702}). Infinite-dimensional Silva spaces are not  metrisable yet they have a surprising amount of good topological properties. 
  \begin{rem}[Properties of Silva spaces]\label{rem:pSilva}
    We recall now for the reader's convenience several well-known yet crucial properties of Silva spaces:
    \begin{enumerate}
     \item[(S1)] Silva spaces are \textbf{sequential} \cite[Proposition 6]{MR97702}. This means that the topology is determined by sequences, i.e., sets are closed if and only if they are sequentially closed and functions defined on Silva spaces are continuous if they are sequentially continuous.
     \item[(S2)] The inductive limit is \textbf{compactly regular}, i.e. every compact subset $K \subseteq E$ is already contained in $E_k$ for some $k$ (and compact in the Banach space topology. This entails the following: A sequence $(x_j)_j$ in $E$ converges to $x\in E$ if and only if there exists a fixed $k\in\N$ such that $(x_j)_j$ and $x$ are contained in the Banach space $E_k$ and the sequence converges in the Banach topology to $x$ \cite[Theorem 1]{MR97702}.
     \item[(S3)] In sequential spaces, continuity coincides with sequential continuity. Hence combining 2. and 3., a mapping $\varphi \colon E \rightarrow F$ is \textbf{continuous if and only if $\varphi \circ I_k$ is continuous for every $k$}, where $I_k \colon E_k \rightarrow E$ is the canonical inclusion.
     \item[(S4)]  A version of the Bolzano-Weierstrass holds, i.e. \emph{every bounded sequence in $E$ has a convergent subsequence}. In this case, a sequence $(x_j)_j$ is called bounded if it is bounded as a sequence in one of the $E_k$. Furthermore, every Silva space is \textbf{separable}. (In the present paper we have no need for these properties.)
     \item[(S5)]  Every closed linear subspace of a Silva space is again a Silva space, \cite[Proposition 1]{MR97702}.
    \end{enumerate}
  \end{rem}
  Moreover, Silva spaces are stable under countable inductive limits:
  \begin{la}\label{la:ctbl_Silva}
    A countable inductive limit $F = \underset{\longrightarrow}{\lim}\ F_k$ of Silva space $F_k$ is a Silva space.
  \end{la}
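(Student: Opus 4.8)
The plan is to reduce everything to Banach building blocks and then reassemble them into a single cofinal sequence with compact bonding maps. I write each Silva space as $F_k = \lim_{\longrightarrow} E_{k,n}$ for an inductive system $(E_{k,n})_{n\in\N}$ of Banach spaces with compact (and, without loss of generality, injective) bonding maps $E_{k,n}\to E_{k,n+1}$. Since the bonding maps $\phi_k\colon F_k\to F_{k+1}$ of the given system are continuous and linear, $F$ is a countable inductive limit of the doubly-indexed family $(E_{k,n})_{k,n\in\N}$ of Banach spaces. By the definition of a Silva space recalled above it then suffices to extract from this family a single increasing sequence $(G_j)_{j\in\N}$ of Banach spaces whose consecutive bonding maps are compact and which is cofinal, in the sense that it yields the same inductive limit $F$.

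The key enabling fact is a factorization lemma: any continuous linear map $T\colon B\to G$ from a Banach space $B$ into a Silva space $G=\lim_{\longrightarrow} G_m$ factors compactly through one of the steps. Indeed, the image $T(B_1)$ of the unit ball $B_1\subseteq B$ is bounded, hence by properties (S4) and (S1) of Remark~\ref{rem:pSilva} relatively compact in $G$, so by compact regularity (S2) it is contained in some step $G_{m_0}$ and relatively compact there; consequently $T$ corestricts to a compact operator $B\to G_{m_0}$. Applying this to the restrictions $\phi_k|_{E_{k,n}}\colon E_{k,n}\to F_{k+1}$, I obtain for every pair $(k,n)$ an index $M(k,n)$ together with a compact linear map $E_{k,n}\to E_{k+1,M(k,n)}$ compatible with the inclusions into $F_{k+1}$. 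Thus, alongside the compact vertical maps $E_{k,n}\to E_{k,n+1}$, there are now also compact horizontal maps between consecutive columns.

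It remains to organise these Banach spaces into one cofinal chain, which I do by a diagonal bootstrap. Set $G_1 \coloneq E_{1,1}$; having chosen $G_j \coloneq E_{j,b_j}$, use the factorization above to pick $b_{j+1}>b_j$ so large that $E_{j,b_j}$ maps compactly into $G_{j+1}\coloneq E_{j+1,b_{j+1}}$ and, simultaneously, so large that $G_{j+1}$ absorbs the (bounded) images in $F_{j+1}$ of the finitely many spaces $E_{k,n}$ with $k,n\le j$. Since $j\to\infty$ and $b_j\to\infty$, every $E_{k,n}$ is absorbed by some $G_j$, so the chain $(G_j)_j$ is cofinal: the canonical continuous map $\lim_{\longrightarrow} G_j\to F$, assembled from the inclusions $G_j\hookrightarrow F_j\to F$ which commute with the compact bonding maps representing the $\phi_j$, then admits a continuous inverse built from the absorption maps, and is therefore a topological isomorphism. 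As $(G_j)_j$ is an inductive system of Banach spaces with compact bonding maps, $\lim_{\longrightarrow} G_j$ is a Silva space, and hence so is $F$. I expect the main obstacle to be exactly this organisational step: the horizontal maps across columns are not given a priori at the Banach level, and the two running indices must both be driven to infinity while keeping every bonding map compact and the chain cofinal. The factorization lemma is precisely what makes the bootstrap go through.
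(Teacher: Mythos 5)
Your proposal is correct and follows essentially the same route as the paper: what you spell out as the factorization lemma plus the diagonal bootstrap is exactly what the paper compresses into ``applying the usual diagonal argument'', and where you verify cofinality and assemble the isomorphism $\lim_{\longrightarrow} G_j \cong F$ by hand, the paper instead delegates this final identification to \cite[Lemma 2]{Kom67}. The only repair needed is in your justification of the factorization lemma: invoking (S4) together with sequentiality (S1) does not formally yield relative compactness of the bounded set $T(B_1)$ (a sequentially compact subset of a sequential space need not be compact, and the paper's (S4) moreover defines boundedness stepwise, which already presupposes what you want); argue instead that Silva limits are regular, so $T(B_1)$ is contained and bounded in some Banach step $G_m$, and then a single application of the compact bonding map $G_m \to G_{m+1}$ gives relative compactness in a step, after which your use of (S2) and the corestriction argument go through verbatim.
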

  
  \begin{proof}
    Assume that we have Banach spaces $E^k_\ell$ such that $F_k = \underset{\longrightarrow}{\lim} E^k_\ell$. Applying the usual diagonal argument, we see that the bonding maps $F_k \rightarrow F_{k+1}$ factor through the bonding maps of suitable steps of the limit $F_{k+1}$. However, the bonding maps $E_{\ell}^k \rightarrow E_{\ell'}^k$ are compact operators. Composition of continuous linear mappings with compact operators yield again compact operators, whence the limit factors through a sequence with compact bonding maps. Thus \cite[Lemma 2]{Kom67} (replacing the property ``weakly compact'' with ``compact'') shows that $F$ is again a Silva space.
  \end{proof}
  
  The space of germs of holomorphic functions, which plays a central role in this paper, is a typical example of a Silva space.
  
  \begin{exa}\label{exa:germholo}
    Let $k \in \N$. With respect to the supremum norm, the bounded holomorphic ($\C$-valued) functions 
    $(\BHol (B_{1/n}(\C^k) ),\lVert \cdot \rVert_\infty)$ on the ball $B_{1/n}(\C^k)$ form a Banach space. Shrinking the ball we obtain canonical continuous inclusions of these Banach spaces into each other. This yields an inductive system whose limit is the space of germs of holomorphic functions:
    \[
       \CHolok = \lim_{\longrightarrow} \Hol (B_{1/n}(\C^k)  ) = \lim_{\longrightarrow} \BHol (B_{1/n}(\C^k)  ),
    \]
    Using the fact that restriction to a smaller ball yield compact linear operators (see \cite[Theorem 8.4]{MR1471480}), $\CHolok$ is a Silva space. 
  \end{exa}
  
  \subsection{Meromorphic functions and germs with prescribed linear poles}
  We now  construct a locally convex algebra of meromorphic functions with a prescribed set of  linear poles. 
  
  \textbf{Banach spaces of meromorphic mappings.}
  
  If $P \colon \C^k \rightarrow \C$ is a non-zero polynomial, we define the space
  
  \[
     \Mer_{P}^{\C} (B_{1/n}(\C^k)) \coloneq \left\{\frac{f}{P} \middle| f \in \BHol^\C (B_{1/n}(\C^k))\right\},
  \]
  
  and endow it with the unique Banach space structure making the map $\BHol^\C (B_{1/n}(\C^k)) \rightarrow \Mer_{P}^{\C}(B_{1/n}(\C^k), f \mapsto \frac{f}{P}$ an isometric isomorphism of Banach spaces, i.e.\, we define the norm $\lVert \frac{f}{P}\rVert_{n,P} \coloneq \lVert f\rVert_\infty$.
  \medskip
  
  Fix a countable set $\newLa$ of linear mappings $\C^k \rightarrow \C$ which does not contain the zero-map. For later purposes, we shall always assume that the linear mappings in $\newLa$ \textbf{have real coefficients}, meaning by that they send $\R^k$ to $\R$. We call $\newLa$ a \textbf{generating set} and denote by
  \[
     {\mathcal{S}_\newLa} \coloneq \{L_1 \cdot L_2 \cdots L_m \mid L_1, \ldots , L_m \in {\newLa}, m \in \N \}
  \]
  the semi-group (without unit) generated by the set $\newLa$, which we equip with a natural order by divisibility of polynomials, i.e.\, $P \preceq Q$ if and only if $P=Q$ or $Q = R \cdot P$ for some $R \in \mathcal{S}_{ \newLa}$.
  \begin{rem} The assumption that $\mathcal{S}_{\newLa}$ does not contain $1$ is motivated by the fact that the inverses of the polynomials in $\mathcal{S}_{\newLa}$ will correspond to the poles of the class meromorphic germs  with linear poles in $ \newLa$.
  \end{rem}
  Using the direct product ordering, we obtain a natural (partial) order
  \[
     \left( (n_1,P_1) \leq (n_2,P_2)\right) \Longleftrightarrow \left(	n_1\leq n_2 \text{ and } P_1 \preceq P_2 \right)	\text{ on } \N \times \mathcal{S}_{\newLa}.
  \]
  The meromorphic functions  with linear poles we consider in the examples below arise in quantum field theory from Feynman integrals \cite{Speer} and in number theory from multizeta functions, see e.g.~\cite{MP}. 
  
  \begin{la}\label{la:closed real subspace}
    For all $P \in \mathcal{S}_{\newLa}$, $k,n \in \N$, the sets  
    \begin{align*}
      \Mer_{P}  (B_{1/n}(\C^k)) \coloneq \left\{\frac{f}{P} \in \Mer_{P}^{\C} (B_{1/n}(\C^k)) \middle| f(B_{1/n}(\R^k)) \subseteq \R\right\},\\
      \BHol  (B_{1/n}(\C^k) ) \coloneq \left\{ f \in \BHol^\C(B_{1/n}(\C^k)) \mid f(B_{1/n}(\R^k))\subseteq \R\right\}
    \end{align*}
    are closed real subspaces of $\Mer_{P}^{\C} (B_{1/n}(\C^k))$ and $\BHol^\C (B_{1/n}(\C^k))$, respectively. In particular, both are real Banach spaces and $\BHol (B_{1/n}(\C^k))$ is even a real subalgebra.
  \end{la}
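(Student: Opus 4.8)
The plan is to prove all three assertions for $\BHol(B_{1/n}(\C^k))$ first and then transport them to $\Mer_P(B_{1/n}(\C^k))$ along the isometric isomorphism $\Phi\colon\BHol^\C(B_{1/n}(\C^k))\to\Mer_P^\C(B_{1/n}(\C^k))$, $f\mapsto f/P$, which was used to define the Banach structure on $\Mer_P^\C$. Since the elements of $\newLa$ have real coefficients, $P\in\mathcal{S}_\newLa$ maps $\R^k$ into $\R$, so the condition ``$f$ sends $B_{1/n}(\R^k)$ into $\R$'' cutting out $\BHol$ is exactly the condition defining $\Mer_P$; hence $\Mer_P(B_{1/n}(\C^k))=\Phi(\BHol(B_{1/n}(\C^k)))$. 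As $\Phi$ is an isometric isomorphism of Banach spaces it carries closed real-linear subspaces to closed real-linear subspaces, so each statement for $\Mer_P$ follows once it is established for $\BHol$.

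For $\BHol(B_{1/n}(\C^k))$ the algebraic claims are immediate. If $f,g$ take real values on $B_{1/n}(\R^k)$ and $\lambda,\mu\in\R$, then $\lambda f+\mu g$ is again bounded holomorphic and takes the real values $\lambda f(x)+\mu g(x)$ on $B_{1/n}(\R^k)$, so $\BHol$ is a real-linear subspace (but not a complex one, since multiplication by $i$ destroys reality). Likewise $fg$ is bounded holomorphic with $\lVert fg\rVert_\infty\le\lVert f\rVert_\infty\lVert g\rVert_\infty$ and satisfies $(fg)(x)=f(x)g(x)\in\R$ on $B_{1/n}(\R^k)$, which yields the subalgebra statement.

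The only point requiring a genuinely topological argument is closedness, and I would argue as follows. For each fixed $x\in B_{1/n}(\R^k)$ the evaluation $\ev_x\colon\BHol^\C(B_{1/n}(\C^k))\to\C$, $f\mapsto f(x)$, is continuous because $\lvert f(x)\rvert\le\lVert f\rVert_\infty$. Therefore
\[
  \BHol(B_{1/n}(\C^k))=\bigcap_{x\in B_{1/n}(\R^k)}\ev_x^{-1}(\R)
\]
is an intersection of preimages of the closed set $\R\subseteq\C$ under continuous maps, hence closed; equivalently, a sup-norm limit of functions that are real on $B_{1/n}(\R^k)$ is again real there because $\R$ is closed in $\C$. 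Being a closed subspace of a Banach space, $\BHol(B_{1/n}(\C^k))$ is complete and thus a real Banach space, and the same conclusions transfer to $\Mer_P(B_{1/n}(\C^k))$ through $\Phi$ as explained. None of the steps presents a serious obstacle; the only care needed is to keep track of the real- versus complex-linear structure and to record that $P$ is real-valued on $\R^k$, so that the two defining conditions match under $\Phi$.
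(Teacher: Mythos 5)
Your proof is correct and follows essentially the same route as the paper: both reduce the statement for $\Mer_{P}(B_{1/n}(\C^k))$ to the one for $\BHol(B_{1/n}(\C^k))$ via the isometric isomorphism $f \mapsto f/P$ (using that $P$ has real coefficients), and both obtain closedness by writing $\BHol(B_{1/n}(\C^k)) = \bigcap_{x \in B_{1/n}(\R^k)} \ev_x^{-1}(\R)$ with the point evaluations continuous and $\R$ closed in $\C$. Your version merely spells out the real-linearity and subalgebra verifications that the paper leaves implicit.
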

  
  \begin{proof}
    By construction $\Mer_{P}^{\C} (B_{1/n}(\C^k)) \cong \BHol^\C (B_{1/n}(\C^k) )$ via the map $f \mapsto P\cdot f$ where the right hand side carries the topology of uniform convergence. As $P$ has real coefficients, we see that multiplying with $P$ takes $\Mer_{P}  (B_{1/n}(\C^k))$ to the subspace $\BHol  (B_{1/n}(\C^k))$. Hence it suffices to prove the claims for $\BHol^\C(B_{1/n}(\C^k) )$. 
    
    The point evaluations $\ev_z \colon \BHol^\C (B_{1/n}(\C^k) ) \rightarrow \C$  are continuous algebra morphisms for every $z \in B_{1/n}(\C^k)$. As $\R$ is a closed real subalgebra of $\C$, the result then follows from 
    $$\BHol  (B_{1/n}(\C^k)) =\bigcap_{z \in B_{1/n}(\R^k )} \ev_z^{-1}(\R).\qquad \qedhere$$
  \end{proof}
  
  \begin{exa}\label{ex:Feynman}{For any $k\in \N$, we consider the generating set 
    \[
       \newLa^F\coloneq \left\{\C^k \ni (z_1, \cdots, z_k)\mapsto \sum_{i\in I} z_i, \,\, \emptyset\neq I\subseteq [[1, k]]\right\},
    \]
    so that
    \[
       \mathcal{S}_{\newLa}^F \coloneq \left\{\C^k \ni (z_1, \cdots, z_k)\mapsto \prod_{j=1}^J \sum_{i_j\in I_j} z_{i_j}, \quad\emptyset \neq I_j\subseteq [[1, k]]\, \forall j\in [[1, J]] \right \}.
    \]
    This set hosts poles of meromorphic germs which naturally arise from computing Feynman diagrams, hence the choice of superscript $F$.}
  \end{exa}
  \begin{exa} {For any $k\in \N$, we consider the generating set 
    \[
       \newLa^F\supseteq \newLa^C\coloneq \left\{\C^k \ni (z_1, \cdots, z_k)\mapsto \sum_{i=1}^j z_i, \,\,  j\in [[1, k]]\right\},
    \]
    so that
    \[
       \mathcal{S}_{\newLa^C} \coloneq \left\{\C^k \ni (z_1, \cdots, z_k)\mapsto \prod_{j=1}^J \sum_{i_j=1}^j z_{i_j}, \quad j\in [[1, J]],  J\in [[1, k]]\ \right \}.
    \]
    This set hosts poles of meromorphic germs which naturally arise from discrete and integral Laplace transforms on Chen cones $\{(x_1, \cdots, x_k)\in \R_{\geq 0}^k, 0<x_j<\cdots <x_1\}$, hence the choice of superscript $C$, see \cite{GPZ2} for the pole structure of Laplace transforms on convex polyhedral cones.}
  \end{exa}
  \begin{la}\label{lem:cpt-op}
    For all $(n,P) \leq (m,Q) $ in $\N \times \mathcal{S}_{\newLa}$ the inclusion
    \[
       \iota_{(n,P),(m,Q)}^{\C} \colon \Mer_{P}^{\C} (B_{1/n}(\C^k)) \rightarrow \Mer_{Q}^{\C} (B_{1/m}(\C^k)) ,\quad \frac{f}{P} \mapsto \frac{\left.f\cdot \frac{Q}{P}\right|_{B_{1/m}(\C^k)}}{Q},
    \]
    is continuous linear and if $n < m$, the inclusion is a compact operator. Moreover, the inclusion restricts to a continuous linear and compact operator $\iota_{(n,P),(m,Q)}  \colon \Mer_{P} (B_{1/n}(\C^k)) \rightarrow \Mer_{Q}   (B_{1/m}(\C^k))$.
  \end{la}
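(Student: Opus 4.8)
The plan is to transport the entire question to the spaces of bounded holomorphic functions via the defining isometric isomorphisms, where both continuity and compactness become transparent. First I would record well-definedness: since $(n,P)\leq(m,Q)$ forces $P\preceq Q$, either $P=Q$ or $Q=R\cdot P$ for some $R\in\mathcal{S}_{\newLa}$, so in every case $R\coloneq Q/P$ is a genuine polynomial on $\C^k$. Because $n\leq m$ we have $B_{1/m}(\C^k)\subseteq B_{1/n}(\C^k)$, so for $f\in\BHol^\C(B_{1/n}(\C^k))$ the product $f\cdot R$ is holomorphic on $B_{1/n}(\C^k)$ and its restriction to the smaller ball is bounded; hence $\iota_{(n,P),(m,Q)}^\C$ indeed lands in $\Mer_Q^\C(B_{1/m}(\C^k))$.

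Next I would factor the map through $\BHol^\C$. Writing $\Phi_n\colon\BHol^\C(B_{1/n}(\C^k))\to\Mer_P^\C(B_{1/n}(\C^k)),\ f\mapsto f/P$ and $\Phi_m\colon\BHol^\C(B_{1/m}(\C^k))\to\Mer_Q^\C(B_{1/m}(\C^k)),\ g\mapsto g/Q$ for the defining isometric isomorphisms, a direct computation gives
\[
   \Phi_m^{-1}\circ\iota_{(n,P),(m,Q)}^\C\circ\Phi_n \;=\; \rho\circ M_R,
\]
where $M_R\colon f\mapsto f\cdot R$ is multiplication by $R$ on $\BHol^\C(B_{1/n}(\C^k))$ and $\rho\colon\BHol^\C(B_{1/n}(\C^k))\to\BHol^\C(B_{1/m}(\C^k))$ is the restriction map. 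Since $\Phi_n,\Phi_m$ are isometries, $\iota_{(n,P),(m,Q)}^\C$ inherits exactly the continuity and compactness properties of $\rho\circ M_R$.

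For continuity I would use the estimate $\lVert f\cdot R\rVert_{\infty,B_{1/m}(\C^k)}\leq\lVert R\rVert_{\infty,B_{1/m}(\C^k)}\cdot\lVert f\rVert_{\infty,B_{1/n}(\C^k)}$ (valid since $B_{1/m}(\C^k)\subseteq B_{1/n}(\C^k)$ and $R$ is bounded on the ball), which shows directly that $\rho\circ M_R\colon f\mapsto (f\cdot R)|_{B_{1/m}(\C^k)}$ is continuous with operator norm at most $\lVert R\rVert_{\infty,B_{1/m}(\C^k)}$. For the compactness claim when $n<m$, the key input is that the restriction $\rho$ is a compact operator, which is precisely the fact underlying Example \ref{exa:germholo} (restriction of bounded holomorphic functions to a strictly smaller ball is compact, cf.\ \cite[Theorem 8.4]{MR1471480}). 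Since the composition of a compact operator with a continuous linear map is again compact, the composite $\rho\circ M_R$ is compact; transporting back through $\Phi_n,\Phi_m$ yields compactness of $\iota_{(n,P),(m,Q)}^\C$. I expect this last point---recognising that one only needs compactness of the pure restriction and may absorb the polynomial factor into a bounded multiplication operator---to be the one genuinely load-bearing step, the remainder being bookkeeping.

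Finally, for the real version, note that $R$ has real coefficients (as a product of elements of $\newLa$, or the constant $1$), so both $M_R$ and $\rho$ preserve the reality condition $f(B_{1/n}(\R^k))\subseteq\R$; hence $\iota_{(n,P),(m,Q)}^\C$ restricts to a map $\Mer_P(B_{1/n}(\C^k))\to\Mer_Q(B_{1/m}(\C^k))$. Because these real subspaces are closed by Lemma \ref{la:closed real subspace}, and an operator which is continuous (resp.\ compact) on the ambient space remains so upon restriction to a closed invariant subspace, the restricted map $\iota_{(n,P),(m,Q)}$ is again continuous linear and, for $n<m$, a compact operator.
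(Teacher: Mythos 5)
Your proposal is correct and follows essentially the same route as the paper: transporting the map through the isometric isomorphisms with $\BHol^\C$, factoring it as restriction composed with multiplication by $Q/P$, deducing continuity from the sup-norm estimate (the paper phrases this via the Banach algebra structure), and obtaining compactness for $n<m$ from compactness of the restriction operator together with the fact that composing a compact operator with a continuous linear map is compact. Your treatment of the real case (reality of the coefficients of $Q/P$ plus closedness of the real subspaces from Lemma \ref{la:closed real subspace}) likewise matches the paper's argument, merely spelled out in slightly more detail.
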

  
  \begin{proof}
    By construction we have $\Mer_{P}^{\C} (B_{1/n}(\C^k)) \cong \BHol^\C (B_{1/n}(\C^k) )$. Since multiplication turns $\BHol^\C (B_{1/n}(\C^k) )$ into a Banach algebra and $P$ divides $Q$, we see that the map $\Mer_{P}^ {\C} (B_{1/n}(\C^k))\ni \frac{f}{P} \mapsto f\cdot \frac{Q}{P} \in \BHol^\C(B_{1/n}(\C^k))$ is continuous linear. It is well known that the composition of a continuous linear map and a compact operator is again a compact operator.  It is a known fact that the inclusion maps
    \[
       \BHol^\C (B_{1/n}(\C^k) ) \rightarrow \BHol^\C (B_{1/m}(\C^k) ),\ f \mapsto f|_{B_{1/m}(\C^k)}
    \]
    are continuous linear and  compact operators for $n < m$  cf.\, \cite[Appendix A]{MR3453778} or \cite[Section 8]{MR1471480}. Now since the elements in $\mathcal{S}_\newLa$ have real coefficients, $\iota_{(n,P),(m,Q)}^ {\C}$ takes $\Mer_{P}  (B_{1/n}(\C^k))$ to $\Mer_{Q}  (B_{1/m}(\C^k))$, whence its restriction $\iota_{(n,P),(m,Q)}  \colon \Mer_{P} (B_{1/n}(\C^k)) \rightarrow \Mer_{Q} (B_{1/m}(\C^k))$ becomes a continuous linear and compact operator between real Banach spaces.
  \end{proof}
  
  The upshot of Lemma \ref{lem:cpt-op} is that for each $k \in \N$ we obtain an inductive system of (real or complex) Banach spaces 
  with continuous linear connecting maps (the next picture shows a piece of the inductive system for $L,P\in\newLa$, (we are most interested in the real system shown below but the statement also holds for the complex Banach spaces):
  \[
     \adjustbox{scale=.8,center}{
     \begin{tikzcd}[thick,scale=0.6, every node/.style={transform shape}]
       \Mer_{1} (B_1(\C^k)) \arrow[r] \ar[d] & \Mer_{L} (B_1(\C^k)) \ar[r] \ar[d] 	& \Mer_{LP} (B_1(\C^k)) \ar[r] \ar[d]	& \cdots \\
       \Mer_{1} (B_{1/2}(\C^k)) \arrow[r] \ar[d] & \Mer_{L} (B_{1/2}(\C^k)) \ar[r] \ar[d]	& \Mer_{LP} (B_{1/2}(\C^k)) \ar[r] \ar[d] 	& \cdots \\
       \vdots 							 & \vdots 						& \vdots 						& \ddots
     \end{tikzcd}
     }
  \]
  where every arrow pointing down represents a compact operator.

  We are now ready to give a more precise description of the locally convex algebra $\CMerok$ of germs of meromorphic functions with poles in $\newLa$. Its real subalgebra $\RMerok$ of germs mapping $\R^k$ to $\R$ will serve as a basic building block for the topological decomposition we are about to construct.
  Note that the algebra of germs of meromorphic functions contains the space $\CHolok$ (resp. $\RHolok$ for the real subalgebra) of germs of holomorphic functions which form a subalgebra of $\CMerok$.   
  \begin{prop}\label{prop:lcxv-alg}
    The locally convex inductive limits 
    \begin{align*}
      \CMerok &\coloneq \lim_{\substack{\longrightarrow \\ (n, P)\in\N\times \mathcal{S}_{\newLa}}} \Mer_{P}^{\C} (B_{1/n}(\C^k))\\
      \RMerok &\coloneq \lim_{\substack{\longrightarrow \\ (n, P)\in\N\times \mathcal{S}_{\newLa}}} \Mer_{P}  (B_{1/n}(\C^k))
    \end{align*}
    are Hausdorff and complete. Moreover, they are Silva spaces and the pointwise multiplication turns both into locally convex algebras.
    
    The spaces $\CHolok$ of germs of holomorphic functions and $\RHolok \coloneq \lim\limits_{\substack{\longrightarrow \\ n}} \BHol  (B_{1/n}(\C^k))$ are Silva space which continuously inject into $\CMerok$ and $\RMerok$, respectively. 
  \end{prop}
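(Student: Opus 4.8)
The plan is to realise both inductive limits as Silva spaces by exhibiting a cofinal sequence whose bonding maps are compact, and then to read off the remaining assertions from the general theory recalled in Subsection~\ref{rem:Silva}. As a first step I would record that the index set $\N \times \mathcal{S}_{\newLa}$ is countable and directed: countability holds because $\newLa$ is countable, so its semigroup $\mathcal{S}_{\newLa}$ of finite products is countable, and directedness follows from $(\max(n_1,n_2), P_1 P_2) \geq (n_i,P_i)$, using that $P_1 P_2$ is divisible by both $P_1$ and $P_2$. By Lemma~\ref{la:closed real subspace} the objects $\Mer_{P}(B_{1/n}(\C^k))$ (and their complex counterparts) are Banach spaces, and by Lemma~\ref{lem:cpt-op} the bonding maps are continuous linear.

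The decisive step is to pass to a cofinal sequence along which the bonding maps are compact, and this is where I expect the main obstacle to lie: Lemma~\ref{lem:cpt-op} only gives compactness of $\iota_{(n,P),(m,Q)}$ when $n<m$, i.e.\ when the ball strictly shrinks, whereas moving ``horizontally'' (enlarging the pole $P$ with $n$ fixed) yields maps that are merely continuous. I would circumvent this by enumerating $\mathcal{S}_{\newLa} = \{P_1, P_2, \ldots\}$ and setting $Q_j := P_1 P_2 \cdots P_j$ and $n_j := j$. Then $(n_j, Q_j)_{j}$ is increasing with \emph{strictly} increasing first coordinate, so by Lemma~\ref{lem:cpt-op} every bonding map of this subsequence is compact; and it is cofinal, since any $(n,P)$ with $P = P_i$ satisfies $(n,P) \leq (j, Q_j)$ for $j \geq \max(n,i)$, because $Q_j = R \cdot P_i$ for some $R \in \mathcal{S}_{\newLa}$ forces $P_i \preceq Q_j$. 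As an inductive limit over a cofinal subsystem is isomorphic to the full one, $\CMerok$ and $\RMerok$ are inductive limits of Banach spaces with compact bonding maps, hence Silva spaces; Hausdorffness is built into the Silva inductive limit and completeness is a standard property of Silva/(DFS)-spaces \cite{MR97702}.

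For the algebra structure I would establish joint continuity of pointwise multiplication by reducing to the Banach steps. By compact regularity (S2), any convergent sequence in $\CMerok \times \CMerok$ eventually lies in a single product $\Mer_{P}(B_{1/n}(\C^k)) \times \Mer_{P'}(B_{1/n'}(\C^k))$ of Banach steps; restricting both factors to a common ball $B_{1/N}(\C^k)$ with $N = \max(n,n')$, the map $(f/P, g/P') \mapsto fg/(PP')$ lands in $\Mer_{PP'}(B_{1/N}(\C^k))$ and is bounded bilinear by submultiplicativity of the supremum norm. Hence multiplication is sequentially continuous, and therefore continuous since Silva spaces are sequential (S1); the real case is the restriction of the complex one.

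Finally, $\CHolok$ is a Silva space by Example~\ref{exa:germholo}, and $\RHolok = \lim \BHol(B_{1/n}(\C^k))$ is one directly (its restriction bonding maps are compact by Lemma~\ref{lem:cpt-op}) or, alternatively, as the closed real subspace of $\CHolok$ via (S5). For the continuous injection $\RHolok \hookrightarrow \RMerok$ (and likewise over $\C$), the convention $1 \notin \mathcal{S}_{\newLa}$ is harmless: fixing any $P_0 \in \mathcal{S}_{\newLa}$, the step maps $\BHol(B_{1/n}(\C^k)) \to \Mer_{P_0}(B_{1/n}(\C^k))$, $f \mapsto f P_0/P_0$, are continuous and compatible with the restriction bonding maps, so by (S3) they induce a continuous map on the limit; on germs this is the identity inclusion of a holomorphic germ viewed as a meromorphic one, hence injective. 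Conceptually this is the statement that, adjoining a formal minimal pole $1$ with $\Mer_1 = \BHol$, the system $\N \times \mathcal{S}_{\newLa}$ is cofinal in the extended system (as $1 \preceq P$ for every $P \in \mathcal{S}_{\newLa}$), so the holomorphic system embeds as a subsystem.
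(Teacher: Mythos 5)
Your proposal is correct and follows essentially the same route as the paper: a cofinal sequence in the countable directed set $\N\times\mathcal{S}_{\newLa}$ with compact bonding maps (your explicit chain $(j,P_1\cdots P_j)$ just makes concrete what the paper asserts via a general diagonal argument), continuity of multiplication checked on the Banach steps and promoted to the limit using the Silva/sequentiality properties (the paper cites \cite[Theorem 9]{Kom67} and \cite{MR97702} for the same purpose), and the continuous injection of the holomorphic germs via the step inclusions $f\mapsto fP_0/P_0$ and the universal property. The only cosmetic differences are that the paper uses the inclusions $\BHol^\C(B_{1/n}(\C^k))\hookrightarrow \Mer_P^\C(B_{1/n}(\C^k))$ for all $P$ rather than a fixed $P_0$, and your compactness citation for the $\BHol$ restriction maps really belongs to Example \ref{exa:germholo} rather than Lemma \ref{lem:cpt-op}; neither affects correctness.
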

  
  \begin{proof}
    We prove the complex case and note that the statements for $\RHolok$ and $\RMerok$ can be proved by the same argument after replacing all the complex spaces with the corresponding real subspaces. Since the directed set $\N\times\mathcal{S}_{\newLa}$ is countable (and without a maximum), the inductive system $(\Mer_{P}^{\C} (B_{1/n}(\C^k)))_{(n,P) \in \N \times \mathcal{S}}$ admits a cofinal sequence of Banach spaces with compact connection maps. It follows that  the locally convex topology of the inductive limit is Hausdorff, complete and a Silva space. Observe that the multiplication in $\CMerok$ factors through multiplication maps on the steps of the limit (using that the product commutes with the inductive limit, \cite[Theorem 9]{Kom67}), i.e.\ for $n<m$:
    \begin{align*}
      m_{(n,P),(m,Q)}^{\C} \colon \Mer_{P}^{\C} (B_{1/n}(\C^k)) \times &\Mer_{Q}^{\C} (B_{1/m}(\C^k)) \rightarrow \Mer_{PQ}^{\C} (B_{1/m}(\C^k),\\ \left( \frac{f}{P} , \frac{g}{Q}\right) &\mapsto \frac{f|_{B_{1/m}(\C^k)}\cdot g|_{B_{1/m}(\C^k)}}{PQ}.
    \end{align*}
    Using  again that fact that $\Mer_{P}^{\C} (B_{1/n}(\C^k)) \cong \BHol^\C (B_{1/n}(\C^k))$ and that the latter spaces are Banach algebras with respect to pointwise multiplication, we see that the bilinear map $m^{\C}_{(n,P),(m,Q)}$ is continuous on every Banach step generating the inductive limit. Since $\CMerok$ is a Silva space, this  implies that multiplication is continuous, see \cite[Proposition 6 and Theorem 1]{MR97702}. We conclude that $\CMerok$ is a locally convex algebra. 
    
    As for the structure of the space of holomorphic germs, Example \ref{exa:germholo} shows that
    \[
       \CHolok=   \underset{\substack{\longrightarrow \\ n\in\N }}{\lim}\BHol^\C (B_{1/n}(\C^k) )
    \]
    is a Silva space. For the space $\CHolok$ we can leverage that the bonding maps of the inductive limit restrict to the bonding maps of the real inductive system formed by the $\BHol  (B_{1/n}(\C^k) )$. Thus  $\RHolok$ is also a Silva space.
    By the universal property of the direct limit, the continuity of the inclusions $\BHol^\C (B_{1/n}(\C^k)) \hookrightarrow  \Mer_{P}^{\C} (B_{1/n}(\C^k))$ for any $n$ in $\N$ and any $P\in \mathcal{S}_{\newLa}$, gives rise to a continuous embedding $\CHolok \rightarrow \CMerok$.
  \end{proof}
  
  \begin{rem}
    We will see later (Theorem \ref{thm:splittingspaces}) that the canonical inclusion map from $\RHolok$ to $\RMerok$ is not only continuous but moreso, a topological embedding.
  \end{rem}

  The next statement, which is useful for the sequel, shows how convergence in $\CMerok$ relates with uniform convergence. Again an analogous statement holds for the real subspace $\RMerok$.
  
  \begin{cor}  \label{cor:silva_and_uniform_convergence}
    Let $k\in\N$ be fixed and let $(f_m)_{m\in\N}$ be a sequence in
    \[
       {\displaystyle\CMerok= \underset{\substack{\longrightarrow \\ (n,P)}}{\lim} \Mer_P^{\C}(B_{1/n}(\C^k))}
    \]
    that converges to $f\in \CMerok$. Then there exists an open ball $U\subseteq \C^k\setminus\{0\}$ on which $f$ and each $f_m$ are bounded and holomorphic, and such that the restrictions $f_m|_{\overline U}$ converge uniformly to $f|_{\overline U}$.
  \end{cor}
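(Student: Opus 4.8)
The plan is to reduce convergence in the Silva space to convergence in a single Banach step using compact regularity, and then to localise away from the pole set of the relevant denominator.

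First I would invoke property (S2) of Remark \ref{rem:pSilva}: since $(f_m)_m$ converges to $f$ in the Silva space $\CMerok$, there is a single index $(n,P)\in\N\times\mathcal{S}_{\newLa}$ such that $f$ and all the $f_m$ lie in the Banach space $\Mer_{P}^{\C}(B_{1/n}(\C^k))$ and $f_m\to f$ in its Banach norm. Writing $f=g/P$ and $f_m=g_m/P$ with $g,g_m\in\BHol^\C(B_{1/n}(\C^k))$, and recalling that $\lVert\cdot\rVert_{n,P}$ is defined so that $\tfrac{h}{P}\mapsto h$ is an isometric isomorphism onto $\BHol^\C(B_{1/n}(\C^k))$, the convergence $f_m\to f$ in $\Mer_{P}^{\C}(B_{1/n}(\C^k))$ is exactly the statement $\lVert g_m-g\rVert_\infty\to 0$, i.e.\ uniform convergence of the numerators on the whole ball $B_{1/n}(\C^k)$.

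Next I would choose the ball $U$. Since $P\in\mathcal{S}_{\newLa}$ is a product of linear forms, its zero set $Z(P)$ is a finite union of hyperplanes through the origin, hence closed with empty interior, so $B_{1/n}(\C^k)\setminus(\{0\}\cup Z(P))$ is a nonempty open set. Picking a point $z_0$ in it and $\delta>0$ small enough that $\wb{B_\delta(z_0)}\sub B_{1/n}(\C^k)\setminus(\{0\}\cup Z(P))$, I set $U\coloneq B_\delta(z_0)$. On the compact set $\wb U$ the continuous function $\lvert P\rvert$ attains a positive minimum $c>0$, and since $P$ does not vanish on $U$, both $f=g/P$ and each $f_m=g_m/P$ are holomorphic on $U$ and bounded on $\wb U$. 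Finally, on $\wb U\sub B_{1/n}(\C^k)$ I would estimate
\[
   \sup_{\wb U}\lvert f_m-f\rvert=\sup_{\wb U}\left\lvert\frac{g_m-g}{P}\right\rvert\le \frac{1}{c}\,\sup_{\wb U}\lvert g_m-g\rvert\le\frac{1}{c}\,\lVert g_m-g\rVert_\infty,
\]
which tends to $0$ as $m\to\infty$, giving the desired uniform convergence of $f_m|_{\wb U}$ to $f|_{\wb U}$.

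The only step requiring genuine care is the first one: everything rests on extracting a \emph{single} Banach step that simultaneously accommodates the whole sequence and its limit, with a common denominator $P$ and common domain $B_{1/n}(\C^k)$. This is precisely what compact regularity of the Silva space furnishes, and I do not expect any serious obstacle beyond correctly appealing to (S2). Once the functions share such a step, division by a denominator bounded away from zero on $\wb U$ converts Banach-norm convergence into uniform convergence, and the selection of $U$ disjoint from $Z(P)\cup\{0\}$ is elementary.
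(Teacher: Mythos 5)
Your proposal is correct and follows essentially the same route as the paper's own proof: compact regularity (S2) to place the whole sequence and its limit in a single Banach step $\Mer_P^{\C}(B_{1/n}(\C^k))$, uniform convergence of the numerators there, and then choosing a ball whose closure avoids the zero set of $P$ (a finite union of proper subspaces, which in particular contains $0$, so avoidance of the origin is automatic) on which $\lvert P\rvert$ is bounded below. Your explicit estimate $\sup_{\wb U}\lvert f_m-f\rvert\le c^{-1}\lVert g_m-g\rVert_\infty$ merely spells out what the paper leaves implicit in its final sentence.
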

  
  \begin{proof}
    First of all, since $\CMerok$ is a Silva space, this means that there is one fixed number $n\in\N$ and a polynomial $P\in \mathcal{S}_{\newLa}$ such that the whole sequence $(f_m)$ as well as the limit $f$ lie in the space $\Mer_P^{\C}(B_{1/n}(\C^k))$ and we have that $(f_m)_m$ converges in $\Mer_P^{\C}(B_{1/n}(\C^k))$ to $f$.
    By the definition of the topology of $\Mer_P^{\C}(B_{1/n}(\C^k))$, this means that the sequence $(P f_n)_n$ converges uniformly to $Pf$ on $B_{1/n}(\C^k)$. 
    
    Since the zero-set $P^{-1}(\{0\})$ of the polynomial $P$ is a finite union of proper vector subspaces, it does not contain interior points. Hence, the open set
    $W \coloneq B_{1/n}(\C^k)\setminus P^{-1}(\{0\})$ is non empty.
    Now, we pick a point $a\in W$ and an $\varepsilon>0$ such that the compact ball of radius $\varepsilon$ around $a$ lies in 
    $W$. 
    The claim then follows for the open ball $U$ of radius $\varepsilon$ around $a$.
  \end{proof}

  \subsection{The subspaces of polar germs}
  
  We now sharpen  the decompostion of  the space $\CMerok$  as a (non-direct) sum of holomorphic germs and meromorphic germs to a topological decomposition.
  For this, it is essential to work with the real subspace $\RMerok$. Our aim is to construct a subspace of $\RMerok$ consisting of the so called polar germs. To define polar germs we need some preparatory definitions taken from \cite{CGPZ}.
  
  \begin{defn}
    Let $f$ be a meromorphic function defined on an open connected subset $U$ of $\C^k$ for a fixed $k\in\N$. If there are linear forms $L_1, \ldots , L_n$ on $\C^k$ and a meromorphic function $g$ on an open connected subset $W$ of $\C^n$, such that $f=g\circ\phi$ on $U\cap \phi ^{-1}(W)$, where $\phi=(L_1, \cdots, L_n)\colon \C ^k \to \C ^n$. We then say that 
    \begin{itemize}
     \item $f$ \textbf{depends} on the linear subspace $\mathrm{span}(L_1, \cdots, L_n)$. 
     \item  One can show that there is a smallest subspace with this property generated by some linear forms $L_1,\cdots, L_n$ . We call the smallest linear subspace on which $f$ depends, $\mathrm{Dep} (f) = \mathrm{span}(L_1, \cdots, L_n) \subseteq(\C ^k)^*$, the \textbf{dependence subspace} of $f$ \cite[Definitions 2.9 and 2.13]{CGPZ}.
    \end{itemize}
    There is also a dual notion: 
    \begin{itemize}
     \item We say that $f$ is \textbf{constant in direction} $v\in\C^k$ if the directional derivative $D_vf$ is equal to the constant zero function.
     \item The \textbf{independence subspace} of $f\colon U\to\C$ is defined as the set of all vectors such that $f$ is constant in the direction of $v$:
      \begin{equation}\label{eq:indspace}
        \Indep(f)\coloneq \{v\in\C^k \mid D_v(f)=0\},
      \end{equation}
    \end{itemize}
  \end{defn}
  
  These notions are connected via the observation:
  \[
     \Dep(f) = \{L\in(\C^k)^* \mid L|_{\Indep(f)}=0\}.
  \]
  
  Note that derivatives for meromorphic functions only need to be calculated at the regular, holomorphic points as they form an open dense connected subset of the domain of the original function. 
  For a meromorphic germ $f\in\RMerok$ the spaces
  $\Dep(f)\subseteq(\C ^k)^*$ and $\Indep(f)\subseteq \C^k$ are defined using any of its representing {locally defined functions}. It is clearly well-defined. 
  Furthermore, the notion of dependence subspace is compatible with the Silva-space topology via the following statement:
  
  \begin{la} \label{la:dep_and_silva_convergence}
    Fix $k\in\N$ and assume that $f_1,f_2,\ldots $ is a sequence of elements in $\RMerok$ which converges to $f$ in $\RMerok$. Then for any $\ell$ in $\Dep(f)$, there is a subsequence $(f_{m_j})_{j\in \N}$ and a sequence $\ell_{m_j}$ in $ \Dep(f_{m_j}), j\in \N$ which converges to $\ell$. 
  \end{la}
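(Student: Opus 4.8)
The plan is to transfer the convergence $f_m \to f$, which a priori lives only in the inductive-limit topology of $\RMerok$, into honest uniform convergence of holomorphic functions on a fixed domain, and then to track the independence subspaces $\Indep(f_m)$ through this convergence; the assertion about $\Dep$ follows by dualising (taking annihilators). Concretely, by compact regularity of the Silva space ((S2) in Remark~\ref{rem:pSilva}) the set $\{f_m\}\cup\{f\}$ is compact, so there are a fixed $n\in\N$ and $P\in\mathcal{S}_\newLa$ with $f$ and all $f_m$ in $\Mer_P(B_{1/n}(\C^k))$ and $Pf_m \to Pf$ uniformly on $B_{1/n}(\C^k)$. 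All these germs are holomorphic on the common connected regular domain $\Omega \coloneq B_{1/n}(\C^k)\setminus P^{-1}(\{0\})$ (connected, since $P^{-1}(\{0\})$ is a finite union of complex hyperplanes, hence of real codimension two), and on every compact $K\subseteq\Omega$, where $|P|$ is bounded away from $0$, we obtain $f_m\to f$ uniformly. By the Weierstrass convergence theorem and the Cauchy estimates, all directional derivatives then converge locally uniformly on $\Omega$. Moreover $v\in\Indep(f)$ is equivalent to the vanishing of $D_v f$ on all of $\Omega$ (by the identity theorem on the connected set $\Omega$), so the independence subspaces are entirely detected on $\Omega$; the same holds for each $f_m$.

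First I would establish an upper semicontinuity statement for $\Indep$: if $v_m\in\Indep(f_m)$ and $v_m\to v$ in $\C^k$, then $v\in\Indep(f)$. Writing $D_{v_m}f_m - D_v f = D_{v_m}(f_m-f) + D_{v_m-v}f$ and using that $f_m-f\to0$ locally uniformly (whence its derivatives do too), that $(v_m)$ is bounded, and that $D_w f$ is linear in $w$, both summands tend to $0$ locally uniformly on $\Omega$. Since the left-hand side vanishes identically, $D_v f\equiv 0$ on $\Omega$, i.e.\ $v\in\Indep(f)$.

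Then I would run a compactness argument on Grassmannians. Passing to a subsequence I may assume $\dim\Indep(f_{m_j})=r$ is constant, and then, by compactness of the Grassmannian $\mathrm{Gr}(r,\C^k)$, that $\Indep(f_{m_j})\to V$ for some $r$-dimensional subspace $V$. Convergence in the Grassmannian means that every vector of $V$ is a limit of vectors drawn from the $\Indep(f_{m_j})$ (for instance via convergence of the orthogonal projections), so the upper semicontinuity of the previous step forces $V\subseteq\Indep(f)$. Taking annihilators, and using the relation $\Dep=\mathrm{Ann}(\Indep)$ together with the continuity of the annihilator map $\mathrm{Gr}(r,\C^k)\to\mathrm{Gr}(k-r,(\C^k)^*)$, gives $\Dep(f_{m_j})\to \mathrm{Ann}(V)$ with $\Dep(f)\subseteq \mathrm{Ann}(V)$. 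Finally, given $\ell\in\Dep(f)\subseteq \mathrm{Ann}(V)$, the Grassmannian convergence $\Dep(f_{m_j})\to \mathrm{Ann}(V)$ provides $\ell_{m_j}\in\Dep(f_{m_j})$ with $\ell_{m_j}\to\ell$, which is exactly the claim.

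The main obstacle is the interplay between the two limits in the second step, where both the direction $v_m$ and the function $f_m$ vary simultaneously; this is why I isolate the decomposition $D_{v_m}f_m - D_v f = D_{v_m}(f_m-f)+D_{v_m-v}f$, reducing everything to the locally uniform convergence of derivatives granted by Weierstrass. A secondary subtlety is the need to stabilise $\dim\Indep(f_{m_j})$ before invoking Grassmannian compactness, since the dependence dimension is only semicontinuous and may genuinely jump in the limit. Once the subspaces are forced into a single Grassmannian and shown to converge, the annihilator correspondence converts the upper semicontinuity of $\Indep$ into the desired approximability of elements of $\Dep(f)$.
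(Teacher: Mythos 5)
Your proposal is correct and follows essentially the same route as the paper's proof: reduce to uniform convergence on a fixed domain off the pole set (this is exactly Corollary \ref{cor:silva_and_uniform_convergence}), stabilise $\dim\Indep(f_m)$ along a subsequence, establish upper semicontinuity of $\Indep$ via locally uniform convergence of derivatives, and dualise to $\Dep$. The only difference is packaging: where you invoke compactness of the Grassmannian and continuity of the annihilator map, the paper extracts a convergent subsequence of $Q$-orthonormal bases adapted to $\Indep(f_m)$ and performs the dualisation by hand via the Riesz representation $\ell = Q(\cdot,v)$ --- your explicit decomposition $D_{v_m}f_m - D_v f = D_{v_m}(f_m-f) + D_{v_m-v}f$ even makes the double-limit step more transparent than the paper's brief appeal to continuity of differentiation.
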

  We postpone the rather technical proof to Appendix \ref{App:proofdetails}.
  To obtain a splitting of the space of meromorphic germs into a direct sum of holomorphic germs and a topological vector space complement, we need the following definitions taken from \cite[Definition 3.1 and Definition 3.2]{GPZ}.
  
  \begin{defn}\label{defn:suppcones}
    The following definitions depend on the choice of the inner product $Q$ and will thus be labeled with $Q$.
    \begin{itemize}
     \item Two meromorphic germs $f$ and $h$ are called \textbf{orthogonal} if $\Dep (f) \perp^Q \Dep (h)$. 
     \item We call a meromorphic germ \textbf{polar germ}, if it can be written in the form $\frac{h}{P}$ for $h \in \RHolok$ and $P \in \mathcal{S}_\newLa$ such that $h$ and $P$ are orthogonal.
     \item We denote by $\polarMerok$ the linear subspace of $\RMerok$ \textbf{generated by the polar germs}.
     \item The cone generated by the linear forms $L_1, \cdots, L_{\ell_i}$ arising in the polar germ $g :=\frac{h }{L_1^{m_{1}}\cdots L_{\ell}^{m_{\ell} }}$  is called the \textbf{supporting cone} of $g$.
     \item A family of polar germs is called \textbf{properly positioned} if there is a choice of a supporting cone for each of the polar germs such that the resulting family of cones is properly positioned, i.e.\ if the cones meet along faces and the union does not contain any nonzero linear subspace. 
    \end{itemize}
  \end{defn}
  
  \begin{thm}\label{polargerms_closed}
    The vector space $\polarMerok$ is closed in $\RMerok$.
  \end{thm}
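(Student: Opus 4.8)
The plan is to exploit the Silva space structure of $\RMerok$ established in Proposition \ref{prop:lcxv-alg}. Since Silva spaces are sequential (property (S1)), it suffices to prove that $\polarMerok$ is sequentially closed. So I would start from a sequence $(g_m)_{m\in\N}$ of elements of $\polarMerok$ converging to some $g\in\RMerok$ and aim to show $g\in\polarMerok$. The decisive first reduction is compact regularity (property (S2)): the convergent sequence together with its limit must be contained in a single Banach step $\Mer_{P_0}(B_{1/n}(\C^k))$, with $g_m\to g$ in the Banach norm, i.e. $P_0\, g_m\to P_0\, g$ uniformly on $B_{1/n}(\C^k)$ (cf.\ also Corollary \ref{cor:silva_and_uniform_convergence}). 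Hence the whole problem reduces to the purely Banach-space statement that the intersection $\polarMerok\cap \Mer_{P_0}(B_{1/n}(\C^k))$ is closed in the Banach space $\Mer_{P_0}(B_{1/n}(\C^k))$.

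Next I would analyse the structure of this intersection. Writing $P_0=L_1^{a_1}\cdots L_r^{a_r}$ as a product of finitely many linear forms from $\newLa$, I claim that every element of $\polarMerok$ which lies in $\Mer_{P_0}(B_{1/n}(\C^k))$ can already be written as a finite sum of polar germs whose denominators divide $P_0$. For a single polar germ $h/P$ this follows from a partial-fraction and divisibility argument combined with the orthogonality $\Dep(h)\perp^Q\Dep(P)$: if $h/P\in\Mer_{P_0}$ then $P\preceq P_0$, because any nontrivial factor of $P$ not dividing $P_0$ would have to divide the numerator $h$, forcing $h$ to depend on a direction in $\Dep(P)$ and contradicting orthogonality. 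For sums one uses the canonical $Q$-orthogonal decomposition of $\polarMerok$ from \cite{GPZ,BV}, which organises polar germs into components indexed by their supporting cones (equivalently, by their dependence subspaces), so that only the finitely many components whose poles lie among $L_1,\dots,L_r$ can contribute. Consequently $\polarMerok\cap \Mer_{P_0}(B_{1/n}(\C^k))$ is a sum of finitely many subspaces, one for each divisor $P\preceq P_0$, each of the form $\{h/P : h\in\RHolok,\ \Dep(h)\perp^Q\Dep(P)\}$.

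Finally I would show that this finite sum is closed. Each individual summand is closed: the map $h\mapsto h/P$ is an isometric isomorphism onto its image, and the orthogonality constraint $\Dep(h)\perp^Q\Dep(P)$ cuts out a closed subspace of holomorphic germs---this is exactly where Lemma \ref{la:dep_and_silva_convergence} enters, since it guarantees that the dependence directions of a limit are limits of dependence directions of the approximants, so that the closed condition $\Dep(\cdot)\perp^Q\Dep(P)$ is preserved under passage to the limit. To pass from ``each summand closed'' to ``the finite sum closed'', I would invoke the directness of the $Q$-orthogonal decomposition: the component projections onto the individual supporting-cone pieces are well defined and, restricted to the Banach step, continuous, so the finite algebraic direct sum of closed subspaces is again closed. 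Combining this with the reduction above yields $g\in\polarMerok$. The main obstacle I anticipate is precisely this last point: controlling the limiting pole structure so that no holomorphic part can emerge in the limit and so that the orthogonality relation survives---the naive sum of closed subspaces need not be closed, and it is the canonical, $Q$-driven directness of the decomposition, together with Lemma \ref{la:dep_and_silva_convergence}, that rescues the argument.
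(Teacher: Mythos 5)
Your opening reductions are exactly the paper's: sequentiality (S1) and compact regularity (S2) place the convergent sequence in a single Banach step $\Mer_{P_0}(B_{1/n}(\C^k))$, and your observation that each individual slice $\{h/P : h\in\RHolok,\ \Dep(h)\perp^Q\Dep(P)\}$ is closed via Lemma \ref{la:dep_and_silva_convergence} is precisely what the paper does in its Case 1 (Montel's theorem plus preservation of orthogonality in the limit, then the locality lemma \cite[Lemma 3.5]{GPZ}). The gap is your final step. You assert that the $Q$-orthogonal decomposition of $\polarMerok\cap\Mer_{P_0}(B_{1/n}(\C^k))$ into components indexed by supporting cones is canonical and direct, with component projections that are continuous on the Banach step. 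Neither half of this is available. First, the decomposition by divisors of $P_0$ is not direct and not canonical: partial-fraction identities such as $\frac{1}{L_1L_2}=\frac{1}{L_1(L_1+L_2)}+\frac{1}{L_2(L_1+L_2)}$ show that one germ admits genuinely different polar decompositions; uniqueness (\cite[Corollary 3.8]{GPZ}) holds only after subdividing the supporting cones into a \emph{properly positioned} family, and that subdivision depends on the family of cones at hand --- the paper needs \cite[Lemmas 4.10 and 4.11]{GPZ} and a Taylor-expansion step to arrange this uniformly along the sequence. Second, and decisively, continuity of the component projections is essentially equivalent to the closedness you are trying to prove: the standard route to it is the open mapping theorem applied to the addition map from the product of the closed slices onto their sum, which presupposes that the sum is complete, i.e.\ closed. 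The argument is circular, and nothing in your proposal excludes the real danger, namely that the numerators $h_i^{(n)}$ of the individual polar summands blow up while cancelling against one another, producing a holomorphic (or differently structured) limit.

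This is exactly where the paper does its hard work, in the case you have no counterpart for. If some numerator sequence is unbounded on every small ball away from the poles, the paper normalises by $r_n=\lVert h_1^{(n)}|_{V_n}\rVert_\infty$, applies Montel's theorem to the rescaled numerators, and obtains a nontrivial properly positioned polar representation of $0$; the uniqueness of the Laurent decomposition for properly positioned families then yields a contradiction. This Case 2 normalisation argument is the substitute for the continuous projections you postulate; without it, or some equivalent a priori bound on the polar components of an element of the Banach step, your last paragraph does not close. The paper itself signals that no such bound is to be expected: even the explicit splitting operators on the steps (Lemma \ref{lem:splittingoff}) are constructed only at the cost of shrinking the ball and are noted to be ``not amenable to an inductive argument'', and they play no role in the proof of Theorem \ref{polargerms_closed}. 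So: correct skeleton and correct identification of Lemma \ref{la:dep_and_silva_convergence} as the orthogonality-preserving tool, but the unbounded-numerator case is a genuine missing idea, and the appeal to continuous component projections begs the question.
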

  
  \begin{proof}
    From \Cref{prop:lcxv-alg} we know that $\RMerok$ is a Silva space and therefore, its topology is sequential. By \Cref{rem:pSilva} (S1), it suffices to check closedness of a subset using sequences. To this end, we consider     a sequence $f_1,f_2,\ldots$ in $\polarMerok$ converging in $\RMerok$ to a meromorphic germ $f$. It remains to show that $f$ lies  in $\polarMerok$, i.e. that $f$ is a linear combination of polar germs.
    
    Since $f$ can be written as a sum of polar germs and holomorphic germs (cf.\ \cite[Corollary 4.16]{GPZ}), we subtract the polar germs from every $(f_n)_n$ and the limit $f$. Hence without loss of generality $f$ is a purely holomorphic germ, i.e.\ $f \in \RHolok$. It remains to show that $f=0$.
    
    By \Cref{rem:pSilva} (S2), the converging sequence $(f_n)_n$ lies entirely in one of the Banach steps $\Mer_P(B_{1/N}(\C^k))$ for an $N\in\N$ and a $P=L_1^{M_1}\cdots L_\ell^{M_\ell}\in S_\newLa$ for a finite set of distinct linear forms $\{ L_1,\ldots,L_\ell \}\subseteq \newLa$ and exponents $M_1,\ldots,M_\ell\in\N$ such that the denominator of each $f_n$ divides $P$.
    
    Applying again the algebraic splitting \cite[Corollary 4.16.]{GPZ} into holomorphic and polar parts, every $f_n$ can be decomposed into a holomorphic part and a finite sum of polar parts. By assumption each holomorphic part is trivial.
    So, for each $n\in\N$ we have
    \[
       f_n = \sum_{i=1}^{p_n} \frac{h_i^{(n)}}{L_1^{m_{i,1}^{(n)}}\cdots L_{\ell_i}^{m_{i,\ell_i}^{(n)}}}
    \]
    with each $h_i^{(n)}$ in $\RHolok$. 
    
    A priori the number $p_n$ of summands can depend on the  element in the sequence but the numbers $\{p_n | n\in\N\}$ are bounded above since the given polynomial $P$ can only have a finite number of decompositions. We may therefore pass to a subsequence and assume that $p_n = p$ is independent of $n$. Similarly, we pass to another subsequence and can achieve that the exponents $m_{i,j}^{(n)}$ do no longer depend on $n$: 
    \begin{equation}\label{eq:useful}
      f_n = \sum_{i=1}^{p} \frac{h_i^{(n)}}{L_1^{m_{i,1}}\cdots L_{\ell_i}^{m_{i,\ell_i} }}\,.
    \end{equation}
    Eq. (\ref{eq:useful}) gives   a  decomposition of $f_n$ into a sum of polar germs, supported by  a family of supporting cones (Definition \ref{defn:suppcones}), which is independent of $n$. From \cite[Lemma 4.10 and Lemma 4.11]{GPZ} we pick a cone subdivision,  and derive from this decomposition in polar germs, another decomposition in polar germs whose supporting cones are properly positioned \cite[Definition 3.2.]{GPZ}. By this we mean that the cones arising in the underlying family of supporting cones  
    meet along faces and their union does not contain any nonzero linear subspace
    (equivalently, any line).  Since the construction of this properly positioned family only depends on the family of supporting cones which is independent of $n$, the same subdivision can be implemented on all $f_n's$. Taylor expanding the $h_i^{(n)}$ with respect to an orthogonal basis containing the $L_{i}$ (cf. \cite[Theorem 2.10 and Corollary 4.18]{GPZ}), we may assume w.l.o.g. that the polar germs $\frac{h_i^{(n)}}{L_1^{m_{i,1}}\cdots L_{\ell_i}^{m_{i,\ell_i} }}, i=1, \ldots, p$ are properly positioned. Note that this involves changing both the numerator and denominator functions of the every summand as well as passing to another $p$. We suppress this in the notation. A priori the new data depends on $n$, but since there are again only finitely many possibilities for the polynomials in the $L_i$, and for $p$, we may again assume that these items do not depend on $n$.      
    \newcommand{\ballAround}[2]{#1+B_{#2}(\C^k)}
    
    By Corollary \ref{cor:silva_and_uniform_convergence} we can find an $a$ in $\C^k\setminus\{0\}$ and an $R>0$ such that 
    each $f_n$ is holomorphic on
    the open ball $U:=\ballAround{a}{R}\subseteq\C^k\setminus\{0\}$ and such that $f_n|_U$  converges uniformly to $f$ on $U$.
    
    There are two cases to consider: 
    Either there exists a smaller open ball 
    $V:=\ballAround{a}{r}$ inside $U$ such that all numerator sequences 
    \[
       (h_1^{(n)})_n, (h_2^{(n)})_n,\ldots,(h_p^{(n)})_n
    \]
    are bounded on $V$, or for every smaller open ball $V$ there is at least one $i\in\{1,\ldots,p\}$ such that the sequence $(h_i^{(n)})_n$ is unbounded on $V$. \medskip
    
    \textbf{Case 1: There is an open ball $V\subseteq U$ such that all numerator sequences are bounded on $V$:}\\
    If all the $p$ numerator sequences $(h_1^{(n)})_n, (h_2^{(n)})_n,\ldots,(h_p^{(n)})_n$ are bounded on $V$, we apply Montel's theorem (see \cite[Theorem 14.6]{Rud87} for the one-dimensional statement whose proof generalises to holomorphic functions of several variables): There exists a subsequence such that each $(h_i^{(n)})_n$ converges to some holomorphic function $h_i$ uniformly on compacts subsets of $V$. Hence, we have the following equality on $V$:
    \begin{align}\label{frac:rep}
      f = \sum_{i=1}^{p} \frac{h_i}{L_1^{m_{i,1}}\cdots        			L_{\ell_i}^{m_{i,\ell_i} }}
    \end{align}
    We will now show that this sum is in fact equal to zero. Note that by construction $\frac{h_i^{(n)}}{L_1^{m_{i,1}}\cdots  L_{\ell_i}^{m_{i,\ell_i} }} \rightarrow \frac{h_i}{L_1^{m_{i,1}}\cdots  L_{\ell_i}^{m_{i,\ell_i} }}$ uniformly on $V$ for $i=1,\ldots, p$. We can thus invoke Lemma  \ref{la:dep_and_silva_convergence} (indeed we need only Step 2 and Step 3 3 from its proof in Appendix \ref{App:proofdetails}) to see that every $\ell$ in $\Dep (h_i)$ is the limit of a sequence of elements in $\Dep (h_i^{(n)})$. Now since for every $i$, the summand $h_i^{(n)} /L_1^{m_{i,1}}\cdots  L_{\ell_i}^{m_{i,\ell_i}}$ is polar, the dependent subspaces of the $h_i^{(n)}$ are $Q$-orthogonal to the dependent subspace of the denominator. As orthogonal complements are closed, we deduce that  $\ell$ is also orthogonal to this dependence subspace. In other words, every summand $\frac{h_i}{L_1^{m_{i,1}}\cdots        	L_{\ell_i}^{m_{i,\ell_i} }}$ is a polar germ, so the right hand side of \eqref{frac:rep} is a sum of polar germs  and the ``locality'' lemma in \cite[Lemma 3.5]{GPZ} (with $a_i=1$) then yields $f=0$.\medskip
    
    \textbf{Case 2: For each open ball $V\subseteq U$ at least one $(h_i^{(n)})_n$ is unbounded on $V$:}\\
    
    By assumption, for each radius $R'<R$, on each $\ballAround{a}{R'}$ at least one of the $h_i^{(n)}$ is unbounded. Since there are only finitely many values for $i$, we may assume that there is one $i_0$ such that $(h_{i_0}^{(n)})_n$ is unbounded on every $\ballAround{a}{R'}$. To simplify notation, we may assume that $i_0=1$.
    
    Now we fix a sequence of positive radii $R=R_1>R_2>R_3>\cdots$ converging to $0$ and denote by $V_n := \ballAround{a}{R_n}$ the corresponding open balls.
    After passing---once again---to a subsequence, we may assume that $r_n:=\| h_1^{(n)}|_{V_n}\|_\infty>n$ for each $n$ in $\N$.
    We can now divide the whole equation by $r_n$ and obtain:
    \[
       \frac{1}{r_n} f_n = \sum_{i=1}^{p} \frac{h_i^{(n)}/r_n}{L_1^{m_{i,1}}\cdots L_\ell^{m_{i,\ell}}}\,.
    \]
    Now, the new numerator sequence $(h_1^{(n)}/r_n)_n$ is bounded on each small ball $V_n$, but there is no ball $V_n$ where it converges to zero since it has norm $\Vert h_i^{(n)}|_{V_n}\Vert /r_n=1$.
    
    If on each open ball, at least one of the new numerator sequences $(h_i^{(n)}/r_n)_n$ is unbounded, we repeat this procedure until we end up with a case where all numerator sequences are bounded on each open ball and at least one of them does not converge to $0$ on every open ball.
    
    Now, as in the above Case 1, we use Montel's theorem and after passing to a subsequence, we may assume that each $(h_i^{(n)})_n$ converges uniformly on compact ball around $a$ to a holomorphic functions $h_i$. Therefore, we have:
    \[
       0 = \sum_{i=1}^{p} \frac{h_i}{L_1^{m_{i,1}}\cdots L_\ell^{m_{i,\ell}}}\,.
    \]
    By construction, at least one of the numerator functions is not the zero function. Now we exploit the fact that the polar germs are properly positioned. Thus the uniqueness of the Laurent decomposition  \cite[Corollary 3.8]{GPZ} (here ) shows that the new numerator sequences have to converge to $0$, but this contradicts the fact that there is at least one numerator sequence not converging to zero. This contradiction shows that Case 2 cannot happen.
  \end{proof}
  
  \begin{rem}
    We observe that the implementation of the ``locality'' lemma in Case 1 of the proof of Theorem \ref{polargerms_closed} hinges on the fact that we are working with the real subspace $\RMerok$. The cited argument does not hold for the full complex space $\CMerok$. This is the reason why we need  to restrict to the real subalgebra.
  \end{rem}

  \begin{thm}\label{thm:splittingspaces}
    The space $\RMerok$ splits as a topological direct sum of the holomorphic germs and the space generated by the polar germs. In other words, the map 
    \[
       \sigma \colon \RHolok \times \polarMerok \rightarrow \RMerok, \quad (f,g) \mapsto f +g
    \]
    is an isomorphism of locally convex spaces. In particular, the Silva topology of $\RHolok$ coincides with the subspace topology induced by $\RMerok$.
  \end{thm}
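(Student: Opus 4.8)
The plan is to recognise $\sigma$ as a continuous linear bijection between Silva spaces and then to let a soft functional-analytic principle, the open mapping theorem, produce a continuous inverse without constructing it by hand. First I would assemble the relevant spaces. By Proposition~\ref{prop:lcxv-alg} the factor $\RHolok$ is a Silva space in its natural topology, and by Theorem~\ref{polargerms_closed} the factor $\polarMerok$ is a closed linear subspace of the Silva space $\RMerok$, hence itself a Silva space by Remark~\ref{rem:pSilva}~(S5). A finite product of Silva spaces is again a Silva space: writing each factor as an inductive limit of Banach spaces with compact bonding maps, the products of the Banach steps have bonding maps that remain compact, so $\RHolok \times \polarMerok$ is a Silva space. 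In particular the domain is ultrabornological, while the codomain $\RMerok$, being a Silva space, is webbed.

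Next I would verify that $\sigma$ is a continuous linear bijection. Continuity is immediate, since $\sigma$ is the composite of the continuous inclusions $\RHolok \hookrightarrow \RMerok$ (Proposition~\ref{prop:lcxv-alg}) and $\polarMerok \hookrightarrow \RMerok$ with the continuous addition of the topological vector space $\RMerok$. Bijectivity is precisely the algebraic direct-sum decomposition $\RMerok = \RHolok \oplus \polarMerok$ of \cite[Corollary~4.16]{GPZ}: surjectivity expresses that every germ splits as a holomorphic plus a polar germ, and injectivity is the triviality of the intersection $\RHolok \cap \polarMerok$.

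Finally, since $\sigma$ is a continuous linear bijection from an ultrabornological space onto a webbed space, De~Wilde's open mapping theorem shows that $\sigma$ is open and therefore a topological isomorphism (equivalently, the closed graph theorem applied to $\sigma^{-1}$ yields continuity of the two induced projections $\RMerok \to \RHolok$ and $\RMerok \to \polarMerok$). The concluding assertion then follows by restriction: $\sigma$ maps the closed subspace $\RHolok \times \{0\}$, carrying the natural topology of $\RHolok$, homeomorphically onto the subspace $\RHolok \subseteq \RMerok$ with its subspace topology, so the two topologies on $\RHolok$ agree (and, incidentally, $\RHolok$ is closed in $\RMerok$).

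I expect no serious obstacle to remain at this stage: the genuine analytic difficulty has already been dispatched in Theorem~\ref{polargerms_closed}, whose closedness statement is exactly what promotes $\polarMerok$ to a Silva space and thereby legitimises the argument above. This is why the step-wise splitting operators of Lemma~\ref{lem:splittingoff}, which are \emph{not} amenable to an inductive passage to the limit, can be bypassed altogether: the open mapping theorem furnishes the inverse abstractly, so no explicit inductive construction of the projection is needed.
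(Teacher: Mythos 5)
Your proposal is correct and follows essentially the same route as the paper's own proof: continuity of $\sigma$ from the continuous inclusions, bijectivity from the algebraic splitting in \cite{GPZ}, the Silva property of $\polarMerok$ via Theorem~\ref{polargerms_closed} and stability of Silva spaces under closed subspaces and finite products, and finally the open mapping theorem for ultrabornological domains and webbed codomains (the paper cites \cite[24.30]{MaV97}, i.e.\ De~Wilde's theorem) to conclude that $\sigma$ is a topological isomorphism. Your closing observations --- that the subspace-topology claim follows by restricting $\sigma$ to $\RHolok\times\{0\}$ and that Lemma~\ref{lem:splittingoff} is thereby bypassed --- match the paper's perspective exactly.
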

  
  \begin{proof} 
    Recall from \Cref{prop:lcxv-alg} that the canonical inclusion of $\RHolok$ into $\RMerok$ is continuous. Since $\RMerok$ carries the subspace topology this shows that the mapping $\sigma$ is continuous. It is a bijection since we know from \cite[Theorem 4.4]{GPZ} that the two spaces form an algebraic direct sum. In Theorem \ref{polargerms_closed} we have seen that $\RMerok$ is a closed subspace of $\RMerok$, whence it is also a Silva space by \cite[Proposition 1]{MR97702}. As the product of two Silva spaces is again a Silva space, we see that $\sigma$ defines a continuous bijection between Silva spaces. Every Silva space is webbed and ultrabornological as a countable inductive limit of Banach spaces. Thus we can apply the open mapping theorem \cite[24.30]{MaV97} to infer that $\sigma$ is also open, whence an isomorphism of locally convex spaces.
  \end{proof}
  
  We established a topological splitting for multivariate germs of meromorphic functions into a direct sum of the germs of holomorphic functions and polar germs. Before we enhance this construction via another limiting process to $\C^\N$ it is worthwhile to consider related results for the one-dimensional case.
  
  \subsection{Germs of meromorphic functions in one variable}  
  In this section we  consider the differences between the multidimensional construction and the case of germs in one variable. First of all, the algebra of germs of meromorphic functions in one variable is known to be a Silva algebra, \cite[Example 3]{MR3883648}.
  To see this we will consider the generating set of linear forms $\newLa= \{\id \colon \C \rightarrow \C\}$ (or to be more in line with the notation of loc.cit., poles will be of the form $z^\ell$ for some $\ell \in \N$). Hence for every monomial $z^\ell, \ell \in \N$ we have the inclusion $\BHol^\C (B_{1/n}(\C)) \subseteq \Mer_{z^\ell}^\C (B_{1/n}(\C))$ and the latter space contains all meromorphic functions with pole of degree at most $\ell$ in $0$. Equipping both spaces with the Banach space structure induced by the supremum norm, the \textbf{space of meromorphic germs at}  $0 \in \C$
  \begin{displaymath}
    \Mero^\C(\C) = \{f \colon U \rightarrow \C\cup\{\infty\} \text{ meromorphic and } 0 \in U \subseteq \C \text{ open}\}/\sim ,
  \end{displaymath}
  becomes a Silva space (cf.\ also \cite[Theorem 8.4]{MR1471480}) and one concludes as above that $\Mero^\C(\C)$ is a locally convex algebra.
  
  In the one-dimensional case, the complement to the holomorphic germs admits an attractive explicit description which we now recall. Consider the space of polynomials $\Poly^\infty(X)\coloneq \C[X]=\mathrm{span}\{X^0,X^1,X^2,\ldots\}$ in a formal variable $X$ and denote by $\Poly^\infty_*(X)\coloneq \text{ span}\{X^1,X^2,\ldots\}$ the linear subspace of polynomials without constant term.
  This space is a Silva space as the direct union $\Poly^\infty_*(X)=\bigcup_{n\in\N} \Poly^n_*(X)$ of finite-dimensional spaces $\Poly^n_*(X)\coloneq \mathrm{span}(X^1,\ldots, X^n)$.
  The bonding maps $\Poly^n_*(X)\to \Poly^{m}_*(X)$ are compact operators for $m\geq n$ since the corresponding spaces are finite-dimensional. 
  Now $\Mer_{z^\ell}^\C(B_{1/n}(\C)) = \BHol^\C (B_{1/n}(\C)) \oplus \Poly^n_*\left(\frac{1}{z}\right)$ as locally convex spaces. Since inductive limits and (finite) products of locally convex spaces commute, this shows that
  \begin{align*}
    \Mero^\C(\C) &= \lim_{\substack{\longrightarrow \\ n,\ell}} \Mer_{z^\ell}^\C (B_{1/n}(\C)) = \lim_{\substack{\longrightarrow \\ n}} \BHol^\C (B_{1/n}(\C)) \oplus \Poly^n_*\left(\frac{1}{z}\right) \\ &= \Holo^\C(\C) \oplus \Poly^\infty_{\ast} \left(\frac{1}{z}\right).
  \end{align*}
  The construction relies on the fact that every meromorphic function can be uniquely split as a holomorphic function plus a polar part, here a polynomial in $1/z$ without constant term. This identification with the algebra of polynomials explains why the one-dimensional case is much simpler and does not need to reference additional structures (such as an inner product).
  
  \begin{rem}
    The topology on germs of meromorphic functions in one variable was also constructed in \cite{MR3883648} and \cite{GrosseErdmann}. The latter source also provides   a topology on spaces of meromorphic functions (rather than just germs of such functions).
    For meromorphic functions the multiplication turns out to be only separately continuous (see \cite[Theorem 5]{GrosseErdmann}) and so these spaces are not locally convex algebras.
    
    It should be noted that although $\Mero^\C(\C)$ is algebraically a field and topologically a locally convex algebra, inversion is \emph{not} continuous with respect to the topology just described, 
    hence $\Mero^\C(\C)$ is not a topological field as can be seen on the following simple counterexample.
    \begin{coexa} 
      The polynomial sequence $(z-1/m)_m$ converges to $z$, yet the inverse sequence  $\left((z-1/m)^{-1}\right)_m$ diverges, since it is not contained in any single step $\Mer_{z^\ell}^\C (B_{1/n}(\C)), n, \ell \in \N$ of the inductive limit $ \Mero^\C(\C)$.
    \end{coexa}This  comes as no surprise  since there is no complex locally convex division algebras outside $\C$ according to the (locally convex) Gelfand-Mazur Theorem (see e.g.~\cite[Remark 4.15]{MR1948922} or \cite[Theorem 1]{ArensLinearTopologicalDivisionAlgebras}).
  \end{rem}
  
  There are more differences between the one-dimensional and the multivariate case which are worth mentioning. This is already relevant  at the level of the Banach steps used to construct the inductive limit leading to the Silva topology. To showcase these differences we begin with an easy result from complex analysis.
  
  \begin{la}\label{la:oneD}
    Let $R>0$, then the map 
    \[
       m_R\colon \BHol^\C (B_R(\C)) \rightarrow \BHol^\C (B_R(\C)),\quad f \mapsto (z \mapsto zf(z)),
    \]
    is a topological embedding as a consequence of 
    \begin{equation}\label{Restimate}\lVert m_R(f)\rVert_\infty = R \lVert f\rVert_\infty.
    \end{equation}
  \end{la}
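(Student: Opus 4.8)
The plan is to derive everything from the norm identity \eqref{Restimate}; once that is established, the embedding property is purely formal, since $m_R$ is evidently linear. The only genuinely analytic input will be the maximum modulus principle, which is needed for one of the two inequalities defining \eqref{Restimate}.

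First I would settle the easy inequality. For every $z \in B_R(\C)$ we have $|z\,f(z)| \le |z|\,|f(z)| \le R\,\lVert f\rVert_\infty$, so taking the supremum over the ball gives $\lVert m_R(f)\rVert_\infty \le R\,\lVert f\rVert_\infty$. The reverse inequality is where the maximum modulus principle enters. Writing $M(r) \coloneq \max_{|z|=r}|f(z)|$ for $0<r<R$, the maximum modulus principle shows that $M$ is non-decreasing, and since $f$ is bounded holomorphic on the open ball one has $\lVert f\rVert_\infty = \sup_{0<r<R} M(r) = \lim_{r\to R} M(r)$. On the circle $|z|=r$ one computes $\max_{|z|=r}|z\,f(z)| = r\,M(r)$, hence $\lVert m_R(f)\rVert_\infty = \sup_{0<r<R} r\,M(r)$. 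Given $\varepsilon>0$, I would pick $r_0<R$ with $M(r_0) > \lVert f\rVert_\infty - \varepsilon$; monotonicity of $M$ then yields $r\,M(r) \ge r\,(\lVert f\rVert_\infty - \varepsilon)$ for all $r\in(r_0,R)$, and letting $r\to R$ gives $\lVert m_R(f)\rVert_\infty \ge R\,(\lVert f\rVert_\infty - \varepsilon)$. As $\varepsilon$ is arbitrary, this proves $\lVert m_R(f)\rVert_\infty \ge R\,\lVert f\rVert_\infty$, and \eqref{Restimate} follows.

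With \eqref{Restimate} in hand, the conclusion is immediate. Applying the identity to $f-g$ and using linearity gives $\lVert m_R(f)-m_R(g)\rVert_\infty = R\,\lVert f-g\rVert_\infty$, so $m_R$ is bi-Lipschitz onto its image: it is $R$-Lipschitz, hence continuous, while its inverse on $m_R(\BHol^\C(B_R(\C)))$ is $(1/R)$-Lipschitz, hence continuous. Since $R>0$, the identity also forces injectivity, because $\lVert m_R(f)\rVert_\infty = 0$ implies $\lVert f\rVert_\infty = 0$. Thus $m_R$ is a homeomorphism onto its image with the subspace topology, i.e.\ a topological embedding.

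I expect the only real content to be the lower bound in \eqref{Restimate}; everything after that is soft. The subtle point is that the supremum defining $\lVert f\rVert_\infty$ over the \emph{open} ball is realised as the limit of the circle maxima $M(r)$ as $r\to R$, which is precisely what allows the factor $|z|=r$ to be pushed up to $R$ in the limit — this is the reason the multiplicative constant in \eqref{Restimate} is exactly $R$ and not something strictly smaller.
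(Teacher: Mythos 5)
Your proposal is correct and takes essentially the same route as the paper: both prove the exact identity $\lVert m_R(f)\rVert_\infty = R\,\lVert f\rVert_\infty$ by the maximum modulus principle, realising the supremum over the open ball as the limit of the circle maxima $r\,M(r)$ as $r\to R$, and then deduce the embedding formally from the norm identity. Your version merely spells out (via monotonicity of $M(r)$ and the $\varepsilon$-argument) the limit interchange that the paper compresses into a single displayed chain of equalities.
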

  
  \begin{proof}
    We use the maximum principle for holomorphic functions on $B_R(\C)$:
    \begin{align*}
      \lVert m_R(f)\rVert_\infty &= \sup_{|z|<R} |zf(z)|= \lim_{r \rightarrow R} \sup_{|z|=r}|z||f(z)| = R \lim_{r \rightarrow R} \sup_{|z|=r}|f(z)| \notag \\ &= R \lVert f\rVert_\infty,
    \end{align*}
    from which it follows that $m_R$ is continuous and a homeomorphism onto its image. 
  \end{proof}
  
  As stated in  Lemma \ref{la:oneD}, multiplying with the variable $z$ (i.e.\ with the linear form $\id \colon \C \rightarrow \C$), we obtain a topopological equivalence of the space of bounded holomorphic functions on the ball. This fails in the multivariate case as the following example shows:
  
  \begin{exa}\label{ex:counter}
    Let $D \coloneq \{(z,w) \in \C^2 \mid |z|^2+|w|^2 <1\}$ be the open unit ball in $\C^2$. We consider the holomorphic map 
    \[
       g\colon D \rightarrow \C, \quad g(z,w) \coloneq \frac{1}{\sqrt{1-w}}.
    \]
    Now $g$ has a singularity at the boundary point $(0,1)$, but it is not hard to see that $f (z, w)\coloneq z\, g(z,w)$ is continuous and bounded on all of $\overline{D}$.
  \end{exa}
  
  The pathology encountered in Example \ref{ex:counter} is a singularity located on the boundary of the open ball we wish to consider, a problem one could avoid by restricting to a smaller ball. This  hints to the added difficulty induced by   considering spaces of meromorphic germs in several variables. However, as our next result shows, it is actually sufficient to shrink the ball to obtain a well behaved splitting operation between spaces of bounded holomorphic mappings:

  \begin{la}\label{lem:splittingoff}
    Given $f\in \BHol^\C (B_{1/n}(\C^k)) $, $L \in \newLa$ and $n \in \N$ there is an integer $m > n$ such that the maps $h:= f\circ \pr_L|_{B_{1/m}(\C^k)} $ and $g=\left.\left(f- f\circ\pr_L\right) / L\right|_{B_{1/m}(\C^k)}$ are bounded holomorphic functions on $B_{1/m}(\C^k)$ and the associated mapping  
    \[
       \theta_{n,m,Q}^L \colon \BHol^\C(B_{1/n}(\C^k)) \rightarrow \BHol^\C (B_{1/m}(\C^k))^2,\quad f= L\cdot g + h \mapsto (g,h),
    \]
    is continuous linear.
  \end{la}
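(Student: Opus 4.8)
The plan is to realise the splitting $f=L\cdot g+h$ from the orthogonal geometry attached to $L$ and to bound each summand by the maximum principle, in the same spirit as the one-variable Lemma~\ref{la:oneD} but applied slice-by-slice along the direction singled out by $L$. Throughout I write $L^\ast\in\C^k$ for the $Q$-Riesz representative of $L$, so that $L(z)=Q(z,L^\ast)$, and I use the properties of $\pr_L$ as the $Q$-orthogonal projection onto $\ker L$: it is norm-nonincreasing, satisfies $L\circ\pr_L=0$, and restricts to the identity on $\ker L$.

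The summand $h$ is immediate. Since $\pr_L$ does not increase the norm, $\pr_L(B_{1/n}(\C^k))\subseteq B_{1/n}(\C^k)$, so $h=f\circ\pr_L$ is already a bounded holomorphic function on $B_{1/n}(\C^k)$ (a holomorphic map precomposed with a linear one) with $\|h\|_\infty\le\|f\|_\infty$; a fortiori its restriction to any smaller ball is bounded holomorphic, and $f\mapsto h$ is linear of operator norm at most $1$. For the other summand, $f-f\circ\pr_L$ vanishes on $\ker L$ because $\pr_L$ is the identity there; as $L$ is a defining linear form for the smooth hypersurface $\ker L$, the quotient $g=(f-f\circ\pr_L)/L$ extends to a holomorphic function. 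The real content of the lemma is thus the quantitative boundedness of $g$ on a suitably shrunk ball.

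For the estimate I would decompose $\C^k=\ker L\oplus\C\hat L$ orthogonally, with $\hat L\coloneq L^\ast/\|L^\ast\|$, and write $z=w+t\hat L$ where $w=\pr_L(z)\in\ker L$ and $t=Q(z,\hat L)\in\C$, so that $\|z\|^2=\|w\|^2+|t|^2$ and $L(z)=\|L^\ast\|\,t$. Fixing $w$, the slice $\phi_w(t)\coloneq f(w+t\hat L)$ is holomorphic and bounded by $\|f\|_\infty$ on the disc $|t|<\sqrt{1/n^2-\|w\|^2}$, and
\[
  g(w+t\hat L)=\frac{1}{\|L^\ast\|}\cdot\frac{\phi_w(t)-\phi_w(0)}{t}.
\]
The difference quotient is holomorphic in $t$ (removable singularity at $0$), so by the maximum principle its modulus on any closed disc $\{|t|\le\rho\}$ lying inside the slice domain is at most $2\|f\|_\infty/\rho$. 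Choosing $m>\sqrt2\,n$ guarantees that for every $z\in B_{1/m}(\C^k)$ one has $\|w\|<1/m$ and $|t|<1/m\le\rho_0\coloneq\sqrt{1/n^2-1/m^2}<\sqrt{1/n^2-\|w\|^2}$, so the estimate applies with $\rho=\rho_0$ and yields
\[
  |g(z)|\le\frac{2}{\|L^\ast\|\,\sqrt{1/n^2-1/m^2}}\,\|f\|_\infty\qquad(z\in B_{1/m}(\C^k)).
\]
Hence $g\in\BHol^\C(B_{1/m}(\C^k))$ and $f\mapsto g$ is linear and bounded.

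Putting the two components together shows that $\theta_{n,m,Q}^L$ is continuous and linear, while $L\cdot g+h=(f-f\circ\pr_L)+f\circ\pr_L=f$ recovers the asserted decomposition. I expect the only genuine obstacle to be the control of the division by $L$: a naive quotient blows up near $\ker L$ (the phenomenon behind Example~\ref{ex:counter}), and it is precisely the shrinking of the radius from $1/n$ to $1/m$ with $m>\sqrt2\,n$ that converts the bound $\|f\|_\infty$ on $B_{1/n}(\C^k)$ into a finite bound for the difference quotient, via the maximum-principle gain of one power of the transverse radius. Everything else (holomorphy of the quotient, the orthogonal coordinate change, and linearity) is routine.
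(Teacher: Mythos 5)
Your proposal is correct and takes essentially the same route as the paper's proof in Appendix~\ref{App:proofdetails}: the same splitting $h=f\circ\pr_L$, $g=(f-f\circ\pr_L)/L$, the same $Q$-orthogonal slicing of the ball along the Riesz representative of $L$, and the same one-variable maximum-principle gain of a power of the transverse radius (the mechanism of Lemma~\ref{la:oneD}) to bound $g$ after shrinking the ball. Your packaging via the difference quotient $(\phi_w(t)-\phi_w(0))/t$, with the explicit choice $m>\sqrt{2}\,n$ and the constant $2\lVert f\rVert_\infty/\bigl(\lVert L^\ast\rVert\sqrt{1/n^2-1/m^2}\bigr)$, is a tidier rendering of the paper's Steps 1--2 (mean value theorem for divisibility by $L$, then the slice estimate for $\delta_R$), not a different method.
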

  
  The proof for this result is a long computational argument which we relegate to Appendix \ref{App:proofdetails}. We shall not use this result in the following, though it is certainly of independent interest.

 \section{Meromorphic germs on $\C^\N$ as a Silva space}
  
  In previous sections we have fixed the dimension $k \in \N$ and constructed spaces of germs of  holomorphic functions and meromorphic with poles determined by a certain generating set $\newLa$. 
  
  We will now consider the (inductive) limit of the spaces $\RMerok$ for $k \rightarrow \infty$.
  Note first that the projection $\text{ pr}_k\colon \C^{k+1} \rightarrow \C^k$ onto the first $k$ components yields a projective system of bonding maps which describe the locally convex direct product $\C^\N$.
  Denote the canonical projection onto the $k$-th step by $\pi^\N_k \colon \C^\N \rightarrow \C^k$.
  Composition with the bonding maps of the projective limit yields continuous linear maps $\text{ pr}_k^\ast\colon \RMerok \rightarrow \Mero_\newLa(\C^{k+1}), g \mapsto g \circ \text{ pr}_k$. 
  Moreover, we have to pick a system of continuous linear polar parts compatible with the the projection.
  \begin{defn}
    Choose and fix for each $k \in \N$ a generating set $\newLa_k$ which is at most countable and 
    \begin{enumerate}
     \item which consists of non-zero linear mappings $\C^k \rightarrow \C$ with real coefficients,
     \item and such that for every $k \in \N$ and $L \in\newLa_k$ we have that $\text{ pr}_k^* L \in\newLa_{k+1}$.
    \end{enumerate}
    We will use the notation $\BnewLa := \bigcup_{k \in \N}\newLa_k$ and ${\mathcal S}_{\BnewLa}:= \bigcup_{k \in \N}{\mathcal S}_{\newLa_k}$.
    To simplify notation, we will write $\RMerok$ for $\Mero_{\newLa_k}(\C^k)$.
  \end{defn}
  
  With these choices we obtain an inductive system of locally convex spaces which allows us to construct an (locally convex) inductive limit 
  \begin{align*}
    \RMeroN &= \{f \colon \C^\N \rightarrow \C \mid \exists k \in \N, f_k \in \RMerok\, \text{ such that } f = f_k \circ \pi_k^\N \}\\
    &=\lim_{\substack{\longrightarrow \\ k}} \RMerok,
  \end{align*}
  which consists of germs of meromorphic germs with poles in ${\mathcal S}_\newLa$.

  The next result shows that holomorphic functions on the direct product $\C^\N$ factor through some holomorphic function on $\C^k$.
  
  \begin{la}\label{lem:dimlim:holo}
    Let $W \subseteq \C^\N$ be an open $0$-neighborhood and $f \colon W \rightarrow \C$ be holomorphic. Then there exists $k \in \N$ and an open $0$-neighbourhood $U \subseteq \C^k$ such that $f|_{\pi_k^{-1}(U)} = \tilde{f} \circ \pi_k^{\N}|_{(\pi_k^{\N})^{-1}(U)}$, where $\tilde{f} \colon U \rightarrow \C$ is holomorphic and $\pi_k^{\N} \colon \C^\N \rightarrow \C^k$ is the canonical projection (onto the first $k$ components).
    In particular, every germ of a holomorphic function on $\C^\N$ coincides with the germ of a holomorphic function on some $\C^k$.
  \end{la}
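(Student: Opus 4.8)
The plan is to exploit that the topology on $\C^\N$ is the product (coordinatewise-convergence) topology, so that every open $0$-neighbourhood contains a cylindrical set constraining only finitely many coordinates, and then to use Liouville's theorem to show that a holomorphic function which is bounded on such a cylinder cannot depend on the unconstrained coordinates. First I would use continuity of $f$ at $0$ together with the openness of $W$ to extract a basic $0$-neighbourhood of the form $N \coloneq (\pi_k^\N)^{-1}(U)$ for some $k \in \N$ and some open $0$-neighbourhood $U \subseteq \C^k$, with $N \subseteq W$ and $\lvert f - f(0)\rvert < 1$ on $N$; in particular $f$ is bounded on $N$, say by $M$. The essential feature of $N$ is that it is a cylinder over the first $k$ coordinates, so that changing a coordinate of index $j > k$ never leaves $N$.

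The key step is to show that $f|_N$ does not depend on the coordinates $z_j$ with $j > k$. Fix $z \in N$ and $j > k$, and let $e_j$ denote the $j$-th unit vector. The complex line $t \mapsto z + t\, e_j$ lies entirely in $N$, and restricting the holomorphic $f$ to this finite-dimensional affine subspace gives a holomorphic map $\C \to \C$. Hence $t \mapsto f(z + t\, e_j)$ is entire and bounded by $M$, so by Liouville's theorem it is constant, giving $f(z + t\, e_j) = f(z)$ for all $t \in \C$. Thus $f|_N$ is invariant under translation in each single tail direction $e_j$, $j > k$.

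To upgrade this to genuine factorisation through $\pi_k^\N$, I would compare two points $z, z' \in N$ with $\pi_k^\N(z) = \pi_k^\N(z')$. Letting $z^{(n)}$ be the point agreeing with $z'$ in the coordinates $1, \ldots, k+n$ and with $z$ in the remaining coordinates, consecutive points $z^{(n-1)}$ and $z^{(n)}$ differ only in the single coordinate $k+n > k$ and both lie in $N$, so the previous step yields $f(z^{(n)}) = f(z^{(n-1)})$ and hence $f(z^{(n)}) = f(z)$ for all $n$. Since $z^{(n)} \to z'$ coordinatewise, hence in $\C^\N$, continuity of $f$ gives $f(z') = f(z)$. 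Therefore $f$ is constant on the fibres of $\pi_k^\N|_N$ and descends to a well-defined $\tilde f \colon U \to \C$ with $f = \tilde f \circ \pi_k^\N$ on $N$. Writing $\iota \colon \C^k \to \C^\N$, $w \mapsto (w, 0, 0, \ldots)$ for the continuous linear section of $\pi_k^\N$, one has $\tilde f = f \circ \iota|_U$, a composition of holomorphic maps, hence $\tilde f$ is holomorphic; passing to germs then gives the final assertion.

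The main obstacle I anticipate is exactly the passage from single-coordinate invariance to full factorisation: Liouville controls only one tail direction at a time, and the two points $z, z'$ may differ in infinitely many tail coordinates, so a finite induction does not suffice. This is what forces the limiting argument above, which crucially combines the coordinatewise nature of the product topology with the continuity of $f$.
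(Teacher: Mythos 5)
Your proof is correct and follows essentially the same route as the paper: extract a cylindrical basic $0$-neighbourhood $(\pi_k^\N)^{-1}(U)$ on which $f$ is bounded, then apply Liouville's theorem to the restriction of $f$ to complex affine lines in the tail directions to conclude that $f$ factors through $\pi_k^\N$, with $\tilde f = f \circ \iota|_U$ holomorphic. The only difference is that the paper applies Liouville just once, along the line $z \mapsto (z_1,\ldots,z_k,0,0,\ldots) + z\,v$ where $v$ is the \emph{entire} tail vector (any $v$ with vanishing first $k$ coordinates), which reaches every point of the fibre directly and makes your coordinatewise telescoping plus continuity-limit step unnecessary --- though that step is perfectly valid, since convergence in the product topology is coordinatewise.
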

  
  \begin{proof}We simplify the notation by setting $\pi_k:=\pi_k^\N$.
    
    Consider a holomorphic function $f \colon W \rightarrow \C$ and fix $R > 0$ such that $|f(0)|<R$. By continuity, $f^{-1}(B_R(0))$ is an open $0$-neighbourhood. We can pick $k \in \N$ and a zero-neighborhood $U = U_1 \times \cdots \times U_k \subseteq \C^k$ which satisfies $\pi_k^{-1}(U) = U_1 \times \cdots \times U_k \times \prod_{n>k} \C \subseteq f^{-1}(B_R (0))$. Consider now the holomorphic map 
    \[
       \tilde{f} \colon U_1 \times \cdots \times U_k \rightarrow \C,\quad (z_1,\ldots,z_k) \mapsto f(z_1,\ldots,z_k , 0, 0,\ldots).
    \]
    We claim that $\tilde{f} \circ \pi_k|_{\pi_k^{-1}(U)} = f|_{\pi_k^{-1}(U)}$, i.e.\ for arbitrary $a_j \in \C, j>k$ we claim that $\tilde{f} (z_1,\ldots,z_k) = f(z_1,\ldots,z_k,a_{k+1},a_{k+2},\ldots)$. To see this pick $v \in \C^\N$ such that $\pi_\ell (v) = 0$ for all $1\leq \ell \leq k$. Then the affine line $a(z) \coloneq (z_1,\ldots,z_k, 0,0,\ldots) + z v$ is contained in $\Omega$ and $g \colon \C \rightarrow \C, z \mapsto f(a(z))$ is a bounded holomorphic function from $\C$ to $\C$, i.e.\ constant by Liouvilles theorem. This shows that $f|_U$ is constant in the variables $a_{k+1},\ldots$ and thus we obtain the desired identity. 
  \end{proof}
  
  Having identified the germs of holomorphic functions on $\C^\N$ as a limit of the germs of holomorphic functions on finite parts of the infinite product, we expect the set of germs of holomorphic functions $\RHoloN$ to also be an inductive limit (of the germs $\RHolok$) and thus a Silva space. As a straight forward consequence of \Cref{lem:dimlim:holo} we obtain:
  
  \begin{prop}\label{prop:HSilva}
    \begin{itemize}
     \item [(a)] The space $\RHoloN$ is a Silva space as the inductive limit of the $\RHolok$.
     \item [(b)] The space $\CHoloN$ is a Silva space as the inductive limit of the $\CHolok$.
    \end{itemize}
  \end{prop}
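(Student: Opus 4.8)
The plan is to realise both $\RHoloN$ and $\CHoloN$ as countable inductive limits of the Silva spaces $\RHolok$ and $\CHolok$, and then to invoke the stability result \Cref{la:ctbl_Silva}. By \Cref{prop:lcxv-alg} (and \Cref{exa:germholo}) each $\RHolok$ and each $\CHolok$ is already a Silva space, so the only genuine work lies in (i) assembling these into a bona fide inductive system indexed by $\N$ with continuous linear bonding maps, and (ii) identifying the resulting limit with the space of holomorphic germs on $\C^\N$. Step (ii) is precisely where \Cref{lem:dimlim:holo} enters.

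For (i), I would take as bonding maps the precomposition operators $\RHolok \to \Holo(\C^{k+1})$, $g \mapsto g \circ \pr_k$, induced by the coordinate projections $\pr_k \colon \C^{k+1} \to \C^k$. These are linear, and injective since $\pr_k$ is surjective. Their continuity is checked on the Banach steps using \Cref{rem:pSilva} (S3): as $\pr_k$ is continuous linear, for each $n$ there is an $m$ with $\pr_k(B_{1/m}(\C^{k+1})) \subseteq B_{1/n}(\C^k)$, so $f \mapsto f \circ \pr_k$ sends $\BHol(B_{1/n}(\C^k))$ into $\BHol(B_{1/m}(\C^{k+1}))$ with $\lVert f \circ \pr_k\rVert_\infty \le \lVert f\rVert_\infty$, hence continuously; passing to the limit yields a continuous linear bonding map on germs. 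The identical estimate applies verbatim to the complex spaces $\CHolok$, since it makes no use of the real structure.

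For (ii), the universal property of the inductive limit produces a continuous linear map $\lim_{\longrightarrow} \RHolok \to \RHoloN$ sending the class of $g \in \RHolok$ to the germ $g \circ \pi_k^\N$ on $\C^\N$. Injectivity is immediate from the injectivity of the bonding maps, while surjectivity is exactly the content of \Cref{lem:dimlim:holo}, which asserts that every holomorphic germ on $\C^\N$ factors through some finite-dimensional $\C^k$. Thus the map is a bijection and we identify $\RHoloN = \lim_{\longrightarrow} \RHolok$; the complex case is the same, now using \Cref{lem:dimlim:holo} in its original $\C$-valued form. As both are countable inductive limits of Silva spaces, \Cref{la:ctbl_Silva} yields that $\RHoloN$ and $\CHoloN$ are Silva spaces, proving (a) and (b). (Alternatively, (a) follows from (b) by observing that $\RHoloN$ is the closed real subspace of $\CHoloN$ and invoking stability of Silva spaces under closed subspaces, \Cref{rem:pSilva} (S5).) The only non-formal ingredient is the factorisation in step (ii); everything else is bookkeeping with the inductive-limit machinery, so the essential obstacle---ruling out genuinely infinite-dimensional holomorphic germs---has already been dispatched by \Cref{lem:dimlim:holo}.
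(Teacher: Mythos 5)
Your proof is correct and takes essentially the paper's route: the paper likewise presents the proposition as a direct consequence of \Cref{lem:dimlim:holo} (every holomorphic germ on $\C^\N$ factors through some $\C^k$), which identifies $\RHoloN$ and $\CHoloN$ with countable inductive limits of the Silva spaces $\RHolok$ and $\CHolok$, whereupon stability under countable inductive limits (\Cref{la:ctbl_Silva}) finishes the argument, and your step (i) merely makes the bonding maps and their continuity explicit. One caveat on your parenthetical alternative for (a): property (S5) would give the Silva property for the \emph{subspace} topology that $\RHoloN$ inherits from $\CHoloN$, so to obtain the statement ``as the inductive limit of the $\RHolok$'' you would still need to verify that this subspace topology coincides with the inductive limit topology --- which is why the paper (as in \Cref{prop:lcxv-alg}) argues via the real inductive system directly rather than through the complex space.
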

  Indeed the above topology of the germs of meromorphic functions turns the subspace of germs of holomorphic functions into the desired inductive limit. 
  Note that the same arguments clearly carries over to the real subspace $\RHoloN$ which is a Silva space and the inductive limit of the Silva spaces $\RHolok$
  Fix for every $k \in \N$ a complex inner product (a sesquilinear form which is linear in the first component)
  \[
     Q_k\colon \C^k\times \C^k \to \C, 
  \]
  that is compatible with the canonical inclusion $\text{inc}_k \colon \C^k\hookrightarrow \C^{k+1}, k\in \N$ i.e.
  \begin{equation}\label{eq:Qiota}Q_{k+1}\circ (\text{inc}_k \times \text{inc}_k)= Q_k\quad \forall k\in \N.
  \end{equation} 
  Moreover, we shall require for every $Q_k$ that $Q_k|_{\R^k \times \R^k}$ is a real inner product.
  The map $\text{inc}_k$ induces a canonical injection $\text{inc}_k^*\colon(\C^{k+1})^*\hookrightarrow(\C^{k})^*$ by pull-back $\text{inc}_k^* (L)(v)\coloneq L(\text{inc}_k(v))$ and for any $ L_1\in(\C^{k})^*$, $ L_2\in(\C^{k+1})^*$ we have
  \begin{equation}\label{eq:iotapr} \left(\mathrm{pr}_k^* (L_1)\right)^* = \text{inc}_k (L_1^*), \quad \left(\text{inc}_k^* (L_2)\right)^* = \mathrm{pr}_k (L_2^*).
  \end{equation} 
  \begin{exa} $k=1$, $L_1= c e_1^*$, $L_1=ae_1^*+ be_2^*\in(\C^{2})^*$, $L_1^*=a e_1+ be_2$, $\mathrm{pr}_1^*(L_1)= c e_1^*\circ \mathrm{pr}_1= c e_1^*+0 e_2^*$, $\mathrm{pr}_1(L_2^*)=a e_1$, $\iota_1 (L_1^*)= ce_1+ 0 e_2$, $i_1^*(L_2)=a e_1^*$ so $\left(\mathrm{pr}_1^* (L_1)\right)^* = \iota_1 (L_1^*)$ and $(i_1^*(L_2))^*=\mathrm{pr}_1(L_2^*)$.
  \end{exa}
  
  We shall now establish the Silva space property of the space of germs, while explicitly describing the convergence in this space for later use.
  
  \begin{prop}\label{prop:MLambdainfty}
    The space $\RMeroN$ is a Silva space, thus $\varphi \colon \RMeroN \rightarrow \C$ is continuous if and only if the restrictions $\varphi_k\colon \RMerok\longrightarrow \C$ of $\varphi$ at every step is continuous. Moreover, $\varphi$ is continuous, if and only if for any sequence $(f_n)_{n\in \N}$ in $\RMerok$ for which there are linear forms $L_m\colon \C^k\to \C, m=1, \cdots, M$ such that the functions $g_n\coloneq \prod_{m=1}^M L_m(z_1,\cdots, z_k)\, f_n, n\in \N $ lie in $\RHolok$ and converge to some function $g$ in $\RHolok$, we have 
    \[
       \varphi_k(f_n)\underset{n\to \infty}{\longrightarrow} \varphi_k\left(\frac{g}{\prod_{m=1}^M L_m(z_1,\cdots, z_k)} \right). 
    \]
    
  \end{prop}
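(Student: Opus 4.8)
The plan is to dispatch the three assertions in turn, each time feeding the problem back into the Silva-space machinery recalled in Remark \ref{rem:pSilva}. First I would note that $\RMeroN = \lim_{\longrightarrow} \RMerok$ is by construction a \emph{countable} inductive limit whose steps $\RMerok$ are Silva spaces by Proposition \ref{prop:lcxv-alg}; hence Lemma \ref{la:ctbl_Silva} applies verbatim and yields that $\RMeroN$ is a Silva space. For the second assertion I would invoke the universal property of the locally convex inductive limit $\RMeroN = \lim_{\longrightarrow} \RMerok$: a linear map $\varphi$ out of the limit is continuous if and only if each composite $\varphi \circ I_k = \varphi_k$ with the canonical inclusion $I_k \colon \RMerok \to \RMeroN$ is continuous. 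This is exactly property (S3) of Remark \ref{rem:pSilva}, so no further work is needed.

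The third assertion is the substantive one, and I would prove it by identifying the convergent sequences of the Silva space $\RMerok$. By the previous paragraph it suffices to treat a fixed $k$ and show that $\varphi_k$ is continuous precisely when it respects the prescribed convergence. Since $\RMerok$ is sequential (property (S1)), $\varphi_k$ is continuous if and only if it is sequentially continuous, so the whole question reduces to a description of convergence in $\RMerok$. Here I would apply compact regularity (property (S2)): a sequence $(f_n)$ together with its limit $f$ converges in $\RMerok$ if and only if all terms and $f$ already lie in a single Banach step $\Mer_{P}(B_{1/N}(\C^k))$, for some $N\in\N$ and some $P\in\mathcal{S}_{\newLa_k}$ (we may take $P$ in the semigroup since it must clear the poles of the $f_n\in\RMerok$), and converge there. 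Writing $P=\prod_{m=1}^M L_m$ as a product of linear forms, the defining isometry $\lVert h/P\rVert_{N,P}=\lVert h\rVert_\infty$ of the Banach step turns convergence in $\Mer_{P}(B_{1/N}(\C^k))$ into the statement that $g_n := P f_n \to P f =: g$ uniformly on $B_{1/N}(\C^k)$, that is, in $\RHolok$, with $f=g/\prod_{m} L_m$. Thus the sequences singled out in the proposition are precisely the convergent sequences of $\RMerok$, and $\varphi_k$ preserving their limits is exactly sequential continuity of $\varphi_k$.

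With this identification in hand both implications follow. For the forward direction, each prescribed sequence converges in $\RMerok$ — this is the reverse reading of the identification above, using that $h\mapsto h/P$ is the isometric isomorphism $\BHol(B_{1/N}(\C^k))\to\Mer_{P}(B_{1/N}(\C^k))$ composed with the continuous inclusion into $\RMerok$ — so continuity of $\varphi_k$ forces $\varphi_k(f_n)\to\varphi_k\bigl(g/\prod_{m}L_m\bigr)$. For the converse I would start from an arbitrary $\RMerok$-convergent sequence, use (S2) to place it in a common Banach step, observe that it is then of the prescribed form, and conclude sequential continuity of $\varphi_k$ from the hypothesis, hence continuity of $\varphi$ by (S1) together with the second assertion.

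The main obstacle is exactly this translation step. One must invoke compact regularity to force the \emph{entire} sequence into a single step $\Mer_{P}(B_{1/N}(\C^k))$; without it the denominators $P$ and the radii $1/N$ could vary with $n$, and one would have no uniform control in a fixed $\RHolok$ with which to match the convergence hypothesis. Once the sequence is trapped in one step, the remaining point is the purely bookkeeping matching of step-norm convergence with uniform convergence of the cleared numerators $g_n = P f_n$, which is immediate from the isometry built into the norm $\lVert\cdot\rVert_{N,P}$.
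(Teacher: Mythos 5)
Your proposal is correct and takes essentially the same route as the paper's own proof: Lemma \ref{la:ctbl_Silva} for the Silva property, property (S3) of Remark \ref{rem:pSilva} for the reduction to the steps $\RMerok$, and compact regularity (S2) plus the isometry $\Mer_P(B_{1/N}(\C^k)) \cong \BHol(B_{1/N}(\C^k))$ to identify the convergent sequences of $\RMerok$ with those of the prescribed form. If anything, your write-up spells out both implications and the role of compact regularity more explicitly than the paper's rather terse argument does.
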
 
  \begin{proof}
    Let us first note that by construction, $\RMeroN$ is the countable inductive limit of the Silva spaces $\RMerok$, see \Cref{prop:lcxv-alg}. Thus it is a Silva space due to Lemma \ref{la:ctbl_Silva}. 
    As outlined in Section \ref{rem:Silva}, if $I_k \colon \RMerok \rightarrow \RMeroN$ is the canonical inclusion, $\varphi_k \coloneq \varphi \circ I_k$ is continuous if and only if $\varphi$ is so. Moreover, as on a Silva space continuity is equivalent to sequential continuity, we pick a sequence $(f_n)_{n \in \N} \subseteq \RMeroN$ converging to some $f$. From \Cref{rem:Silva} we infer the existence of some $k $ in $ \N$ such that $(f_n)_{n \in \N}, f \in \RMerok$. Since $\RMerok$ is a Silva space, by Remark \ref{rem:pSilva} (S2) we may assume that the sequence and its limit is already contained in one of the steps $\Mer_P(B_{1/n} (0))$. Since $\Mer_P(B_{1/n} (0)) \cong \BHol (B_{1/n}(\C^k))$ as Banach spaces, the sequence $(f_n)_n$ converges if and only if $(f_n \prod_{m =1}^M L_m) \in \BHol (B_{1/n}(\C^k))$ converges in the bounded holomorphic functions for suitable linear forms $L_m$. In particular, we deduce that (sequential) continuity of $\varphi_k$ (and thus also of $\varphi$) is equivalent to the condition stated in the statement of the Proposition.
  \end{proof}

\begin{prop}
Pointwise multiplication turns $\RMeroN$ and $\RHoloN$ into locally convex algebras.
\end{prop}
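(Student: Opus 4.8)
The plan is to reduce the continuity of pointwise multiplication on the infinite-dimensional germ spaces to the already established continuity of multiplication on the finite-dimensional steps $\RMerok$ and $\RHolok$ from Proposition \ref{prop:lcxv-alg}, exploiting the fact that Silva spaces are sequential and compactly regular. First I would record that pointwise multiplication is genuinely well defined on the inductive limit: the bonding maps $\mathrm{pr}_k^\ast\colon \RMerok \to \Mero_{\newLa}(\C^{k+1})$, $g \mapsto g \circ \mathrm{pr}_k$, are algebra homomorphisms because $(g_1 g_2)\circ \mathrm{pr}_k = (g_1 \circ \mathrm{pr}_k)(g_2 \circ \mathrm{pr}_k)$. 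Hence $\RMeroN = \lim_{\longrightarrow}\RMerok$ inherits a pointwise product, and the canonical inclusions $I_k\colon \RMerok \to \RMeroN$ satisfy $I_k(f)\cdot I_k(g) = I_k(f\cdot g)$. The identical remark applies to $\RHoloN = \lim_{\longrightarrow}\RHolok$, whose bonding maps are the corresponding restrictions.

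The main point is the continuity of the bilinear multiplication map $m\colon \RMeroN \times \RMeroN \to \RMeroN$. Since finite products commute with locally convex inductive limits, $\RMeroN \times \RMeroN = \lim_{\longrightarrow}(\RMerok \times \RMerok)$ is a countable inductive limit of Silva spaces, hence itself a Silva space by Lemma \ref{la:ctbl_Silva}; in particular it is sequential, so by Remark \ref{rem:pSilva} (S1) it suffices to prove that $m$ is sequentially continuous. Given a sequence $(f^{(j)},g^{(j)})$ converging to $(f,g)$ in $\RMeroN \times \RMeroN$, compact regularity (Remark \ref{rem:pSilva} (S2)) places the whole sequence together with its limit inside a single step $\RMerok \times \RMerok$, with convergence there. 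On that step the restriction of $m$ factors as $I_k \circ m_k$, where $m_k$ is the continuous multiplication on $\RMerok$ furnished by Proposition \ref{prop:lcxv-alg} and $I_k$ is the continuous inclusion; thus $m(f^{(j)},g^{(j)}) = I_k\bigl(m_k(f^{(j)},g^{(j)})\bigr) \to I_k\bigl(m_k(f,g)\bigr) = m(f,g)$ in $\RMeroN$. This yields sequential, hence full, continuity of $m$, so $\RMeroN$ is a locally convex algebra.

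The obstacle I anticipate is precisely this passage from the steps to the limit: for a \emph{bilinear} map one cannot invoke the universal property of the inductive limit directly, because the locally convex inductive limit topology on $\RMeroN \times \RMeroN$ is in general strictly coarser than the final topology in the category of topological spaces, so continuity does not formally reduce to continuity on each step. The resolution rests on the two special features of Silva spaces recorded in Remark \ref{rem:pSilva}, namely sequentiality (S1) and compact regularity (S2), which together force a convergent sequence in the product to live in, and converge within, a single Banach/Silva step — exactly the mechanism already used for the finite-dimensional algebras in Proposition \ref{prop:lcxv-alg}. Finally, the argument for $\RHoloN$ is verbatim the same, with $\RHolok$ in place of $\RMerok$, using that each $\RHolok$ is a locally convex algebra (Proposition \ref{prop:lcxv-alg}) and that $\RHoloN$ is a Silva space by Proposition \ref{prop:HSilva}.
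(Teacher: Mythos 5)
Your proof is correct and follows essentially the same route as the paper: write $\RMeroN \times \RMeroN$ as the inductive limit of the Silva spaces $\RMerok \times \RMerok$, factor the multiplication on each step through the continuous multiplication of $\RMerok$ from Proposition \ref{prop:lcxv-alg}, and conclude via the Silva-space continuity criterion (your explicit (S1)+(S2) sequence argument is exactly what the paper's cited property (S3) packages). The only cosmetic difference is that the paper dispatches $\RHoloN$ by observing it is a multiplicatively closed subalgebra of $\RMeroN$, whereas you rerun the step argument for $\RHolok$ directly, which works just as well.
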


\begin{proof}
 Since $\RHoloN$ is a multiplicatively closed subalgebra of $\RMeroN$, it suffices to establish continuity of the pointwise multiplication for $\RMeroN$.

 The pointwise product is defined on the Silva space $\RMeroN \times \RMeroN = \lim_{k} (\RMerok \times \RMerok)$, where we have used that the product of Silva spaces is again a Silva space as the inductive limit of the $(\RMerok \times \RMerok)$, \cite[Theorem 9]{Kom67}.
 Now restricting the pointwise multiplication to any step $(\RMerok \times \RMerok)$ of the inductive limit, it factors through the multiplication of the algebra $\RMerok$ and for this multiplication we have seen in Proposition \ref{prop:lcxv-alg} that it is continuous.
 Hence continuity of the algebra product follows at once from continuity on the steps of the limit and the property Remark \ref{rem:pSilva} (S3) of Silva spaces.
\end{proof}
 \section{Topological splitting of the space of meromorphic germs with prescribed poles and associated projections}
  We prove a topological refinement of the known algebraic decomposition (see \cite{BV} and \cite{GPZ}) of the algebra $\RMeroN $
  of meromorphic germs at zero on $\C^\N$ mapping $\R^\N$ to $\R$ with a prescribed type of linear pole given by a generating set $\newLa$ as a direct sum of the algebra $ \RHoloN$ of holomorphic germs at zero (again mapping $\R^\N$ to $\R$) and a space $\polarMeroN$ of polar germs at zero. Such a decomposition is often referred to as a minimal subtraction scheme in quantum field theory, and plays a role in toric geometry.
  \subsection{Topological splitting of $\RMeroN$}
  Let us begin with the topological splitting. For this recall from the last section that the space $\RHoloN$ of germs of holomorphic functions which map points in $\R^\N$ to $\R$ is a Silva space. 
  
  \begin{thm}\label{thm:MLambdaCN}
    {The family $Q = (Q_k)_{k\in \N}$ of inner products $Q_k$ on $\C^k$ induces a topological splitting}
    as a direct sum of locally convex vector spaces 
    \begin{equation}\label{eq:projMeroHol}\RMeroN = \RHoloN \oplus \polarMeroN.
    \end{equation}
  \end{thm}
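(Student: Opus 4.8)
The plan is to transcribe the proof of \Cref{thm:splittingspaces} to the level of the inductive limit over $k$ and, as there, to reduce the statement to the open mapping theorem for Silva spaces. Concretely, I would introduce the summation map
\[
\sigma \colon \RHoloN \times \polarMeroN \rightarrow \RMeroN, \quad (f,g) \mapsto f + g,
\]
and show it is an isomorphism of locally convex spaces. Since $\RHoloN$ (\Cref{prop:HSilva}) and $\RMeroN$ (\Cref{prop:MLambdainfty}) are already known to be Silva spaces, the three ingredients I need are continuity of $\sigma$, bijectivity of $\sigma$, and the Silva-space property of $\polarMeroN$. Once these are in hand, $\sigma$ is a continuous bijection between Silva spaces (the domain being Silva as a finite product of Silva spaces), both of which are webbed and ultrabornological as countable inductive limits of Banach spaces, and the open mapping theorem \cite[24.30]{MaV97} upgrades $\sigma$ to an isomorphism exactly as in \Cref{thm:splittingspaces}.

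The step I expect to carry the real content is the compatibility of the finite-dimensional splittings \eqref{eq:splittingMer} with the bonding maps $\pr_k^* \colon \RMerok \rightarrow \Mero_{\newLa_{k+1}}(\C^{k+1})$, $g \mapsto g \circ \pr_k$. Here I would combine the compatibility condition \eqref{eq:Qiota} on the inner products with the adjoint identities \eqref{eq:iotapr}: for a polar germ $h/P$ on $\C^k$ the dependence subspace $\Dep(h)$ is $Q_k$-orthogonal to $\Dep(P)$, and pulling back by $\pr_k$ replaces these by $\pr_k^*(\Dep(h))$ and $\pr_k^*(\Dep(P))$. The relation $(\pr_k^*L)^* = \text{inc}_k(L^*)$ together with $Q_{k+1}\circ(\text{inc}_k \times \text{inc}_k) = Q_k$ shows that $Q_k$-orthogonality is preserved as $Q_{k+1}$-orthogonality, so $\pr_k^*$ maps $\polarMerok$ into $\Mero_{\newLa_{k+1},Q}^-(\C^{k+1})$ and $\RHolok$ into $\Holo(\C^{k+1})$. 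Consequently the finite-level projections $\pi_Q^k$ form a morphism of inductive systems and pass to a continuous projection $\pi_Q \colon \RMeroN \rightarrow \RHoloN$; equivalently, both inclusions $\RHoloN \hookrightarrow \RMeroN$ and $\polarMeroN \hookrightarrow \RMeroN$ arise as inductive limits of the continuous finite-level inclusions and are therefore continuous, so that $\sigma$ is continuous.

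For the remaining points I would argue that $\polarMeroN$, being spanned by polar germs each of which is already represented at some finite level, is the inductive limit $\lim_{\to k}\polarMerok$, and hence a Silva space by \Cref{la:ctbl_Silva}: each $\polarMerok$ is closed in $\RMerok$ by \Cref{polargerms_closed}, hence Silva by \Cref{rem:pSilva} (S5), and the bonding maps restrict to it by the compatibility just established. Bijectivity of $\sigma$ is precisely the \emph{algebraic} direct-sum decomposition of \cite{BV,GPZ}, recovered by passing to the inductive limit of the finite-level algebraic splittings of \Cref{thm:splittingspaces}, since every germ in $\RMeroN$ is represented at some $\RMerok$ where it splits uniquely into a holomorphic and a polar part. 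With $\sigma$ thus a continuous bijection between the Silva spaces $\RHoloN \times \polarMeroN$ and $\RMeroN$, the open mapping theorem for webbed, ultrabornological spaces makes $\sigma$ open, which establishes \eqref{eq:projMeroHol}. The genuine obstacle is the limit-compatibility of the second paragraph; everything else is a direct transcription of the finite-dimensional argument together with the standard Silva-space toolbox.
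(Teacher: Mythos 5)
Your proposal is correct, and you have isolated exactly the step that carries the real content: the compatibility of the finite-level splittings with the bonding maps $\mathrm{pr}_k^*$, which you derive from \eqref{eq:Qiota} and \eqref{eq:iotapr} just as the paper does (this is the implication \eqref{eq:perpQk}). Where you diverge is the final mechanism. The paper does \emph{not} re-run the open mapping theorem at the level of $\C^\N$: once the bonding maps are known to respect the product decompositions $\RMerok \cong \RHolok \times \polarMerok$ of \Cref{thm:splittingspaces}, it simply invokes the fact that locally convex inductive limits commute with finite products \cite[Theorem 9]{Kom67}, so that $\RMeroN = \varinjlim\,(\RHolok \times \polarMerok) \cong \varinjlim \RHolok \times \varinjlim \polarMerok$, and then identifies $\varinjlim \RHolok$ with $\RHoloN$ via \Cref{lem:dimlim:holo}. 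That route makes the limit splitting automatic and needs neither the bijectivity discussion, nor the Silva property of $\polarMeroN$ as an input, nor a second application of \cite[24.30]{MaV97}. Your route --- summation map, Silva property of $\polarMeroN$ via \Cref{polargerms_closed}, \Cref{rem:pSilva} (S5) and \Cref{la:ctbl_Silva}, algebraic bijectivity, open mapping theorem --- is longer but sound, and it buys something the paper leaves implicit: an explicit identification of $\polarMeroN$ as the Silva space $\varinjlim \polarMerok$. Two minor economies: you could obtain the Silva property of each $\polarMerok$ directly from \Cref{thm:splittingspaces} (a complemented subspace is closed) rather than citing \Cref{polargerms_closed} again; and in the injectivity step you should say explicitly that a relation $h+p=0$ with $h\in\RHoloN$, $p\in\polarMeroN$ must first be pushed to a common finite level --- using that pullbacks preserve both holomorphy and, by your compatibility statement, polarity --- before the finite-level uniqueness applies; this is the one point where the limit argument is not a literal transcription of \Cref{thm:splittingspaces}.
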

  
  \begin{proof}
    For the steps of the inductive limit we have already seen in Theorem \ref{thm:splittingspaces} that $\RMerok = \RHolok \oplus \RMerok$.
    Note that the compatibility condition required of the families $(\newLa_k)_{k \in \N}$ and $(Q_k)_{k \in \N}$ imply that the bonding maps $\RMerok \rightarrow \Mero_{\newLa_{k+1}}(\C^{k+1})$ take $\RHolok$ and $\RMerok$ to $\Holo(\C^{k+1})$ and $\Mero^-_{\newLa_{k+1}, Q_{k+1}}(\C^{k+1})$, respectively. 
    Now inductive locally convex limits commute with finite products (see e.g.\ \cite[Theorem 9]{Kom67}). Hence we deduce that 
    \[
       \RMeroN = \lim_{\longrightarrow} \RHolok \times \lim_{\longrightarrow}  \Mero^-_{\newLa_{k}, Q_k}(\C^{k})
    \]
    Moreover, \Cref{lem:dimlim:holo} implies that $\RHoloN = \bigcup_{k \in \N} \RHolok$. Together with the splitting of the inductive limit $\RMeroN$ this shows that $\RHoloN$---as a subspace of $\RMeroN$---is isomorphic as a locally convex space to the inductive limit $ \underset{\longrightarrow}{\lim}\RHolok$. 
  \end{proof}

  \begin{rem}
  Note that the space $\polarMeroN$ is not closed under the pointwise multiplication, since for $L_1,L_2 \in \newLa$ the product $L_1/L_2 \cdot L_2/L_1$ is holomorphic, but for $L_1$ and $L_2$ mutually orthogonal the factors are contained in $\polarMeroN$.
  \end{rem}

  \subsection{A projection map onto $\RHoloN$}
  
  Recall the duality between scalar products $Q$ and $Q^\ast$, see Eq. \eqref{duality:scal}.
  \begin{la} $Q_k^*$ is compatible on $(\C^k)^*$ with the pull-back $\mathrm{pr}_k^*$ by the canonical projections
    \[
       Q_{k+1}^*\circ (\mathrm{pr}_k^* \times \mathrm{pr}_k^* )= Q_k^*\quad \forall k\in \N.
    \]
    
  \end{la}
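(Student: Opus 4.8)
The plan is to reduce the claim to a short chain of substitutions that threads together three ingredients already at our disposal: the definition \eqref{duality:scal} of the induced inner product on the dual, the compatibility \eqref{eq:Qiota} of the family $(Q_k)_{k\in\N}$ with the inclusions $\text{inc}_k$, and the adjunction relation \eqref{eq:iotapr} between the pull-back $\mathrm{pr}_k^*$ and the Riesz duality $L \mapsto L^*$. Since both sides of the asserted identity are sesquilinear forms on $(\C^k)^*\times(\C^k)^*$, it suffices to verify equality on an arbitrary pair $L_1, L_2 \in (\C^k)^*$, and I would carry out the whole argument at that pointwise level.

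Concretely, I would first expand the left-hand side by the definition \eqref{duality:scal} of the dual inner product, obtaining
\[
Q_{k+1}^*\bigl(\mathrm{pr}_k^*(L_1), \mathrm{pr}_k^*(L_2)\bigr) = Q_{k+1}\bigl((\mathrm{pr}_k^*(L_1))^*, (\mathrm{pr}_k^*(L_2))^*\bigr).
\]
Next I would invoke the first identity in \eqref{eq:iotapr}, which identifies the Riesz dual of a pulled-back functional with the inclusion of the Riesz dual, namely $(\mathrm{pr}_k^*(L_i))^* = \text{inc}_k(L_i^*)$ for $i=1,2$; substituting turns the right-hand side into $Q_{k+1}(\text{inc}_k(L_1^*), \text{inc}_k(L_2^*))$. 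The compatibility \eqref{eq:Qiota} of the inner products with the inclusions then collapses this to $Q_k(L_1^*, L_2^*)$, and a final application of the definition \eqref{duality:scal} rewrites this as $Q_k^*(L_1, L_2)$, which is exactly what is claimed.

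There is no genuine obstacle here: the only piece of content beyond unwinding definitions is the behaviour of the Riesz correspondence under $\mathrm{pr}_k^*$, and this has already been isolated as the first identity in \eqref{eq:iotapr}. In fact the lemma is precisely the assertion that $\mathrm{pr}_k^*$ is an \emph{isometric} embedding of $\bigl((\C^k)^*, Q_k^*\bigr)$ into $\bigl((\C^{k+1})^*, Q_{k+1}^*\bigr)$; without \eqref{eq:iotapr} one would instead have to verify by hand that $\mathrm{pr}_k^*$ and $\text{inc}_k$ are mutually adjoint for the pairings induced by $Q_{k+1}$ and $Q_k$, which itself follows directly from \eqref{eq:Qiota}. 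Since \eqref{eq:iotapr} records the needed identification, the proof amounts to the three substitutions above, and I would present it in that compact form.
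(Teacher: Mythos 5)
Your proof is correct and follows exactly the paper's own argument: the same four-step chain of substitutions, expanding by the definition \eqref{duality:scal}, applying the first identity in \eqref{eq:iotapr}, collapsing via \eqref{eq:Qiota}, and rewriting back with \eqref{duality:scal}. No discrepancies to report.
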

  \begin{proof} Indeed, for any $L_1, L_2 $ in $(\C^k)^*$, by (\ref{eq:iotapr}) we have
    \begin{align*} 
      Q_{k+1}^*(\mathrm{pr}_k^* (L_1),\mathrm{pr}_k^* (L_2)) \underset{\mathrm{defn.}\,Q_{k+1}^*}{=}& Q_{k+1}((\mathrm{pr}_k^* (L_1))^*,(\mathrm{pr}_k^*(L_2))^*)\\
      \underset{(\ref{eq:iotapr})}{=}& Q_{k+1}(\text{inc}_k(L_1^*),\text{inc}_k (L_2^*))\\
      \underset{(\ref{eq:Qiota})}{=}& Q_{k}(L_1^*,L_2^*)\\
      \underset{\mathrm{defn.}\,Q_k^*}{=}& Q_{k}^*(L_1,L_2). \qedhere
    \end{align*}
  \end{proof}
  For two linear subspaces $W, W^\prime $ of $(\C^k)^* $, we write $W\,\perp^{Q_k}\,W^\prime$ if the two spaces are perpendicular with respect to the dual inner product, i.e.~$Q_k^\ast (w, w^\prime)=0$ for any $(w, w^\prime)\in W\times W^\prime$. It follows from the above discussion that 
  \begin{equation}\label{eq:perpQk}W\,\perp^{Q_k}\,W^\prime\Rightarrow \mathrm{pr}_k^*(W)\,\perp^{Q_{k+1}}\,\mathrm{pr}_k^*(W^\prime). 
  \end{equation}
  We equip the vector space $ \RMerok $ of meromorphic germs in $k$ variables with linear poles  with the symmetric binary relation $\perp^Q$ 
  \begin{equation}\label{eq:orthQk}f_1\perp^{Q_k} f_2\Leftrightarrow   \Dep(f_1)\perp ^{Q_k} \Dep(f_2).
  \end{equation} 
  It follows from (\ref{eq:perpQk}) that for any $f, f^\prime \in \RMerok$, we have
  \[
     f_1\perp^{Q_k} f_2\Rightarrow \mathrm{pr}_k^* f_1 \perp^{Q_{k+1}}\mathrm{pr}_k^* f_2.
  \]
  
  This shows that the symmetric bilinear relations $\perp^{Q_k}$ on $\RMerok$ with $ k\in \N$, induce a symmetric bilinear relation on   the inductive limit $\RMeroN$ which we denote by $\perp^Q$ so that 
  \[
     f_1\perp^Q f_2\Leftrightarrow {\mathrm{pr}}_k^* f_1\perp^{Q_{k+1}} {\mathrm{pr}}_k^* f_2\quad \forall k\in \N. 
  \]

  \begin{prop}\label{prop:piQ} 
    
    Under the assumptions and with the notations of Theorem \ref{thm:MLambdaCN}, the projection map $\pi_Q:\RMeroN\longrightarrow\RHoloN$ induced by the splitting 
    (\ref{eq:projMeroHol}) is a continuous linear map with the following partial multiplicative property:
    \begin{equation}\label{eq:partialmult}f_1\perp^Qf_2\Longrightarrow \pi_Q(f_1\, f_2)= \pi_Q(f_1)\, \pi_Q(f_2).
    \end{equation}
  \end{prop}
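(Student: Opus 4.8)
The plan is to treat the two assertions separately. Continuity and linearity are essentially formal consequences of Theorem \ref{thm:MLambdaCN}, whereas the partial multiplicativity \eqref{eq:partialmult} is obtained by descending to a single finite-dimensional step of the inductive limit, where the corresponding algebraic identity is already available.

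\emph{Continuity and linearity.} By Theorem \ref{thm:MLambdaCN} the decomposition $\RMeroN = \RHoloN \oplus \polarMeroN$ is a topological direct sum of locally convex spaces; equivalently, the summation map $\RHoloN \times \polarMeroN \to \RMeroN$ is an isomorphism of locally convex spaces. The projection $\pi_Q$ is the composition of its inverse with the continuous coordinate projection onto the factor $\RHoloN$, and is therefore continuous. Linearity is clear, since $\pi_Q$ is the linear projection attached to a vector-space direct sum.

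\emph{Reduction to finite dimensions.} Let $f_1, f_2 \in \RMeroN$ satisfy $f_1 \perp^Q f_2$. Writing $\RMeroN = \lim_{\longrightarrow}\RMerok$, each $f_i$ is represented at some finite level, so that after choosing $k$ large enough both $f_1$ and $f_2$ --- and hence their product $f_1 f_2$, since $\RMerok$ is an algebra by Proposition \ref{prop:lcxv-alg} --- lie in the single step $\RMerok$. Because the relation $\perp^{Q_k}$ is compatible with the bonding maps (cf.\ \eqref{eq:perpQk}), the hypothesis $f_1 \perp^Q f_2$ amounts to the finite-dimensional orthogonality $f_1 \perp^{Q_k} f_2$, i.e.\ $\Dep(f_1) \perp^{Q_k} \Dep(f_2)$, at this level.

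\emph{Compatibility of the projections.} The crucial point is that $\pi_Q$ restricts on the step $\RMerok \hookrightarrow \RMeroN$ to the finite-dimensional projection $\pi_Q^k \colon \RMerok \to \RHolok$ of Theorem \ref{thm:splittingspaces}. This is exactly the content of the construction in Theorem \ref{thm:MLambdaCN}: the splitting of $\RMeroN$ is realised as the inductive limit of the splittings $\RMerok = \RHolok \oplus \polarMerok$, the bonding maps carrying holomorphic germs to holomorphic germs and polar germs to polar germs, so that the $\pi_Q^k$ assemble into $\pi_Q$. Granting this, and using that the finite-dimensional minimal subtraction is partially multiplicative for orthogonal germs --- the algebraic identity $\pi_Q^k(f_1 f_2) = \pi_Q^k(f_1)\,\pi_Q^k(f_2)$ whenever $\Dep(f_1) \perp^{Q_k} \Dep(f_2)$, established in \cite{GPZ} --- we obtain $\pi_Q(f_1 f_2) = \pi_Q^k(f_1 f_2) = \pi_Q^k(f_1)\,\pi_Q^k(f_2) = \pi_Q(f_1)\,\pi_Q(f_2)$, which is \eqref{eq:partialmult}. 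The step I expect to be the main obstacle is making the compatibility of the finite-level projections $\pi_Q^k$ with the bonding maps fully rigorous: one must verify that the orthogonal decomposition determined by $Q_{k+1}$ on $\C^{k+1}$ restricts, along $\mathrm{pr}_k^*$, to the decomposition determined by $Q_k$ on $\C^k$. This is precisely where the compatibility \eqref{eq:Qiota} of the inner products, the stability condition $\mathrm{pr}_k^* \newLa_k \subseteq \newLa_{k+1}$ on the pole sets, and the transfer of orthogonality \eqref{eq:perpQk} are used in concert. Once this consistency is secured, the reduction to a single step is clean and the remaining identity is the purely algebraic statement imported from \cite{GPZ}.
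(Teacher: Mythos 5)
Your proof is correct and follows essentially the same route as the paper: continuity is read off from the topological splitting of Theorem \ref{thm:MLambdaCN}, and the partial multiplicativity \eqref{eq:partialmult} is imported from the algebraic results of \cite{GPZ} after reducing to a single step $\RMerok$. The paper's own proof is in fact terser---it only cites \cite{GPZ} and Theorem \ref{thm:MLambdaCN}---so your explicit verification that the step projections $\pi_Q^k$ are compatible with the bonding maps (via \eqref{eq:Qiota}, the stability $\mathrm{pr}_k^*\newLa_k \subseteq \newLa_{k+1}$, and \eqref{eq:perpQk}) simply spells out details the paper leaves implicit.
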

  \begin{proof} The algebraic splitting of the space of meromorphic functions with linear poles at zero into an algebraic direct sum of the space of holomorphic ones
    and the space spanned by the polar functions was shown (for germs) in \cite[Corollary 4.15]{GPZ} as a consequence of the existence and uniqueness of Laurent expansions. The fact that the splitting is topological is new to our knowledge and a direct consequence of Theorem \ref{thm:MLambdaCN}.
  \end{proof}
  The projection onto the holomorphic part along the polar part can be interpreted as a ``multivariate subtraction scheme'', which
  generalises the minimal subtraction projection operator for meromorphic germs in one variable.

 \section {Topological Minimal Subtraction scheme} 
  The minimal subtraction scheme corresponds to a linear map 
  $$\ev_{z=0}^{\reg}\colon \Mero(\C)\to \C$$ 
  defined by $\ev_{z=0}^{\reg} \coloneq \ev_0 \circ \pi$, where $ \pi$ is the projection onto the holomorphic part of the Laurent expansion at $0$. We call this map the {minimal evaluator} at zero.
  
  Following \cite{GPZ}, in order to generalise such a minimal evaluation to $\RMeroN$ for some  generating set $\newLa$, we use an inner product $Q=\{Q_k, k\in \N\}$ on $\C^\N$. For $k\in \N$, with the notations of (\ref{eq:orthQk}), let 
  \[
     \perp^{Q_k}\coloneq \{(f_1, f_2)\in \RMerok\times \RMerok, \quad f_1\perp^{Q_k}f_2\}.
  \]
  
  \begin{prop}
    For any generating set $\newLa=\cup_k \newLa_k$, and any $k\in\N$, the set 
    \[
       \perp_{\newLa}^{Q_k}\coloneq \perp^{Q_k}\cap\left( \RMerok\times \RMerok\right), \quad k\in \N, 
    \]
    is a closed subset of $ \RMerok\times \RMerok$. Consequently, 
    \[
       \perp_{\newLa}^{Q}\coloneq 	\bigcup_{k\in \N}\perp_{\newLa}^{Q_k} 
    \]
    is a closed subset of the Silva space $ \RMeroN\times \RMeroN$.	
  \end{prop}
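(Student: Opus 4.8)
The plan is to prove both closedness statements by exploiting that Silva spaces, and finite products thereof, are sequential (\Cref{rem:pSilva} (S1), together with the fact from \cite[Theorem 9]{Kom67} that $\RMerok\times\RMerok$ and $\RMeroN\times\RMeroN$ are again Silva spaces, being the relevant inductive limits of products). Hence in each case it suffices to verify \emph{sequential} closedness. For the first assertion I would take a sequence $(f_1^{(n)},f_2^{(n)})_{n}$ in $\perp_{\newLa}^{Q_k}$ converging in $\RMerok\times\RMerok$ to $(f_1,f_2)$, so that $f_1^{(n)}\to f_1$ and $f_2^{(n)}\to f_2$ in $\RMerok$, and aim to show $\Dep(f_1)\perp^{Q_k}\Dep(f_2)$.

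The key input is \Cref{la:dep_and_silva_convergence}, which approximates dependence directions of a limit germ by dependence directions of the approximating germs. Concretely, fix arbitrary $\ell_1\in\Dep(f_1)$ and $\ell_2\in\Dep(f_2)$. Applying \Cref{la:dep_and_silva_convergence} to $f_1^{(n)}\to f_1$ produces a subsequence and elements $\ell_1^{(n_j)}\in\Dep(f_1^{(n_j)})$ with $\ell_1^{(n_j)}\to\ell_1$ in $(\C^k)^*$; applying it again to the still-convergent sequence $f_2^{(n_j)}\to f_2$ produces a further subsequence and $\ell_2^{(n_{j_i})}\in\Dep(f_2^{(n_{j_i})})$ with $\ell_2^{(n_{j_i})}\to\ell_2$. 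Along this common subsequence both families of linear forms converge, and since each $(f_1^{(n_{j_i})},f_2^{(n_{j_i})})$ lies in $\perp_{\newLa}^{Q_k}$ we have $Q_k^*(\ell_1^{(n_{j_i})},\ell_2^{(n_{j_i})})=0$. As $Q_k^*$ is a jointly continuous sesquilinear form on the finite-dimensional space $(\C^k)^*$, passing to the limit yields $Q_k^*(\ell_1,\ell_2)=0$. Since $\ell_1\in\Dep(f_1)$ and $\ell_2\in\Dep(f_2)$ were arbitrary, this gives $\Dep(f_1)\perp^{Q_k}\Dep(f_2)$, i.e. $(f_1,f_2)\in\perp_{\newLa}^{Q_k}$, so the set is sequentially closed and hence closed.

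For the ``consequently'' part I would use the compact regularity of the Silva space $\RMeroN\times\RMeroN=\lim_{\longrightarrow}(\RMerok\times\RMerok)$ (\Cref{rem:pSilva} (S2)). Given a sequence in $\perp_{\newLa}^Q$ converging to some limit, compact regularity places the whole sequence together with its limit inside a single step $\RMerok\times\RMerok$, with convergence occurring there. The well-definedness of $\perp^Q$ on the inductive limit, established via \eqref{eq:perpQk} and the surrounding discussion, shows that the relation $\perp^Q$ restricted to this step is exactly $\perp_{\newLa}^{Q_k}$, i.e. $\perp_{\newLa}^Q\cap(\RMerok\times\RMerok)=\perp_{\newLa}^{Q_k}$; in particular the sequence lies in $\perp_{\newLa}^{Q_k}$. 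By the first part its limit lies in $\perp_{\newLa}^{Q_k}\subseteq\perp_{\newLa}^Q$, which yields sequential closedness and hence closedness.

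The main obstacle is the passage to the limit of the orthogonality condition in the second paragraph: dependence subspaces need not vary continuously, and an arbitrary dependence direction of the limit germ can in general only be \emph{approximated} (after passing to a subsequence) by dependence directions of the approximants. Controlling this is precisely the content of \Cref{la:dep_and_silva_convergence}, and the remaining care lies in carrying out the two successive subsequence extractions for $\ell_1$ and $\ell_2$ so that both hold simultaneously, after which continuity of $Q_k^*$ on the finite-dimensional dual finishes the argument.
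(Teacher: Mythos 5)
Your proof is correct and follows essentially the same route as the paper: sequential closedness via (S1), two successive subsequence extractions using \Cref{la:dep_and_silva_convergence} to approximate arbitrary $\ell_1\in\Dep(f_1)$ and $\ell_2\in\Dep(f_2)$ by dependence directions of the approximants, and continuity of $Q_k^*$ on the finite-dimensional dual to pass the orthogonality to the limit. You additionally spell out the ``consequently'' part via compact regularity (S2) and the compatibility \eqref{eq:perpQk}, which the paper leaves implicit---a welcome completion, not a deviation.
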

  \begin{proof} 
    Recall that the space $\RMerok\times\RMerok$ is a Silva space and therefore sequential, Remark \ref{rem:pSilva} (S1), so that we can show closedness of the subset $\perp_{\newLa}^{Q_k}$ using sequences. For this purpose, we introduce two sequences
    $(f_m)_{m\in\N}$ and $(\widetilde f_m)_{m\in\N}$   in $\RMerok$ such that $(f_m)_m$ converges to $f$,  $(\widetilde f_m)_m$ converges to $\widetilde f$ and  $f_m \perp^{Q_k} \widetilde f_m$ for all $m$ in $\N$.
    It remains to show that $f\perp^{Q_k} \widetilde f$.
    Let $L$ in $ \Dep(f)$ and $\widetilde L $ in $ \Dep(\widetilde f)$ be given. We will show that $Q_k^*(L,\widetilde L)=0$.
    
    We apply   Lemma \ref{la:dep_and_silva_convergence} first to $(f_m)_m$ and then to the sequence $(\widetilde f_m)_m$ and---after choosing   two subsequences---we find  sequences $(L_m)_m$  (resp.  $(\widetilde L_m)_m$) in $(\C^k)^*$   converging  to $L$ (resp.   $\widetilde L$). Furthermore we may assume that for all  $m$ in $\N$ we have $L_m\in\Dep(f_m)$ and $\widetilde L_m\in\Dep(\widetilde f_m)$.
    
    For all $m$ in $\N$ we have $Q_k^*(L_m,\widetilde L_m)=0$.
    The bilinear map $Q_k^*$ is defined on a finite-dimensional space and therefore continuous. Hence, $Q_k^*(L,\widetilde L_m)=0$ follows.
  \end{proof}
  
  \begin{defn}\label{defn:evaluator} Let $\newLa=\cup_k \newLa_k$ be a  generating set. We call (resp. continuous) {generalised $Q$-evaluator} on $ \RMeroN $  a family of linear maps 
    \[
       {\mathcal E}_k\colon  \CMerok \longrightarrow  \C,\quad k\in \N
    \]
    with the following properties. For any $k\in \N$, 
    \begin{enumerate}
     \item ${\mathcal E}_k$ is (partially) multiplicative. For $f_1, f_2$ in  $ \RMeroN $
      \begin{equation}\label{eq:multFQ}f_1\perp^{Q_k}f_2\Longrightarrow {\mathcal E}_k(f_1\cdot f_2)= {\mathcal E}_k(f_1)\cdot {\mathcal E}_k(f_2), 
      \end{equation}
     \item the restriction of ${\mathcal E}_k$ to holomorphic germs $\RHolok$ at zero coincides with the evaluation $\ev_0: f\longmapsto f(0)$ at zero,
     \item ${\mathcal E}_k$ is compatible with the filtration ${\mathcal E}_{k+1} \vert_{\CMerok}= {\mathcal E}_k$. 
     \item (resp. ${\mathcal E}_k$ is continuous).
    \end{enumerate}
  \end{defn}
  
  An inner product $Q$ gives rise to a continuous projection $\pi_Q\colon \RMeroN\rightarrow\RHoloN$ by Proposition \ref{prop:piQ},  which  combined with the (partial) morphism property of $\pi_Q$ on meromorphic germs \eqref{eq:partialmult} and the morphism property of the evaluation at zero $\ev_0$ on holomorphic germs, leads to the following statement.
  \begin{thm}\label{thm:ev}
    For any generating set $\newLa$, the map
    \[
       {\mathcal E}_Q^{\mathrm{MS}}\coloneq \ev_0\circ \pi_Q:\RMeroN\to \C
    \]
    defines a continuous generalised evaluator, which we call \textbf{minimal generalised $Q$-evaluator}.
  \end{thm}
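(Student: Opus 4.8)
The plan is to assemble the statement from the structural results already in hand: the topological splitting of Theorem~\ref{thm:MLambdaCN} and the continuity together with partial multiplicativity of the associated projection recorded in Proposition~\ref{prop:piQ}. Writing $I_k\colon\RMerok\to\RMeroN$ for the canonical inclusion into the $k$-th step of the inductive limit, I would set $\mathcal{E}_k\coloneq\mathcal{E}_Q^{\mathrm{MS}}\circ I_k$ and verify the four conditions of Definition~\ref{defn:evaluator} for this family. Linearity of each $\mathcal{E}_k$ is immediate since $\pi_Q$ and $\ev_0$ are both linear. Condition~(3), compatibility with the filtration, is automatic: the inclusions satisfy $I_{k+1}\vert_{\RMerok}=I_k$ inside the inductive limit, so the restrictions of the single globally defined map $\mathcal{E}_Q^{\mathrm{MS}}$ obey $\mathcal{E}_{k+1}\vert_{\RMerok}=\mathcal{E}_k$.

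For condition~(2) I would use that $\pi_Q$ is, by construction in Theorem~\ref{thm:MLambdaCN}, the projection onto $\RHoloN$ parallel to $\polarMeroN$; hence it restricts to the identity on $\RHoloN$, and on each $\RHolok$ this gives $\mathcal{E}_k=\ev_0\circ\id=\ev_0$, as required. For the partial multiplicativity~(1), suppose $f_1\perp^{Q_k}f_2$. Then Eq.~\eqref{eq:partialmult} of Proposition~\ref{prop:piQ} yields $\pi_Q(f_1 f_2)=\pi_Q(f_1)\,\pi_Q(f_2)$, with both factors holomorphic germs. Since evaluation at zero is an algebra homomorphism on $\RHoloN$, applying $\ev_0$ and using its multiplicativity gives $\mathcal{E}_k(f_1 f_2)=\mathcal{E}_k(f_1)\,\mathcal{E}_k(f_2)$, which is precisely~\eqref{eq:multFQ}.

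For the continuity~(4), I would note that $\pi_Q\colon\RMeroN\to\RHoloN$ is continuous by Proposition~\ref{prop:piQ}, so the only remaining point is continuity of $\ev_0\colon\RHoloN\to\C$. Invoking the Silva-space property~(S3) of Remark~\ref{rem:pSilva}, this can be tested on the Banach steps $\BHol(B_{1/n}(\C^k))$ making up the inductive limit $\RHoloN$; on each such step $|\ev_0(f)|=|f(0)|\leq\lVert f\rVert_\infty$, so point evaluation is bounded, which is the same observation about continuity of the $\ev_z$ used in Lemma~\ref{la:closed real subspace}. Hence $\ev_0$ is continuous and $\mathcal{E}_Q^{\mathrm{MS}}=\ev_0\circ\pi_Q$ is a continuous composition. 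I do not expect a genuine obstacle here, since the hard analytic content---closedness of $\polarMeroN$ and continuity of the projection on the inductive limit---has already been discharged in Theorem~\ref{polargerms_closed}, Theorem~\ref{thm:MLambdaCN} and Proposition~\ref{prop:piQ}; the only mildly delicate step is reducing continuity of $\ev_0$ to the Banach steps, which the sequential characterisation of Silva spaces renders routine.
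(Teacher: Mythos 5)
Your proposal is correct and follows essentially the same route as the paper, which proves Theorem~\ref{thm:ev} simply by combining the continuity of $\pi_Q$ and the partial multiplicativity \eqref{eq:partialmult} from Proposition~\ref{prop:piQ} with the morphism property of $\ev_0$ on holomorphic germs. Your writeup merely fills in details the paper leaves implicit (the verification of conditions (2)--(3) of Definition~\ref{defn:evaluator} and the reduction of continuity of $\ev_0$ to the Banach steps of the Silva limit via (S3), where $|\ev_0(f)|\leq\lVert f\rVert_\infty$), all of which are carried out correctly.
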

  With the notations of Example \ref{ex:Feynman}, let $\Mero_{\newLa^F}(\C^\N)$ denote the corresponding algebra of meromorphic germs with poles in $ \mathcal{S}_{\newLa}^F$. Such meromorphic germs naturally arise from computing Feynman diagrams, hence the choice of subscript $F$.
  \begin{exa}\label{ex:evaluatorQ}The map 
    \[
       \mathcal{E}_Q^{\text{MS}}\coloneq \ev_0\circ \pi_Q:\Mero_{\newLa^F}(\C^\N)\to \C
    \]
    defines a minimal generalised $Q$-evaluator on   $\Mero_{\newLa^F}(\C^\N)$, the algebra of meromorphic germs arising from Feynman diagrams. 
  \end{exa}
  
  Our terminology is borrowed from Speer's classical work on analytic renormalisation \cite{Speer}.
  
  For a function $f$ in $ \Mero_{\newLa^F}(\C^\N)$,  let $\Supp(f)$ denotes the subset of  $\N$ which indexes the variables on which $f$ actually depends.
  For $k $ in $\N$, we define the  sets 
  \[
     \top_k\coloneq \{(f, f')\in \RMerok\times \RMerok, \quad\Supp(f)\cap \Supp(f')=\emptyset \},
  \]
  
  \[
     \top_{\newLa^F, k} = \top_{k}\cap \left(\Mero_{\newLa^F}(\C^k)\times \Mero_{\newLa^F}(\C^k)\right),
  \]
  
  and 
  \[
     \top_{\newLa^F}\coloneq \bigcup_{k\in \N} \top_{\newLa^F, k} .
  \]
  
  In the Example in \S 3 of his paper, Speer defines an explicit family of what he calls generalised evaluators. 
  To $\sigma $ in $ \Sigma_k =\{\sigma: [[1, k]]\to [[1, k]], \ \text{ bijective}\}$, he assigns a linear map ${\mathcal E}^\sigma:\RMerok\to \C$ by
  \[
     {\mathcal E}^\sigma_k(f)\coloneq e_{\sigma(1)}\circ \cdots \circ e_{\sigma(k)}(f)= \ev_{z_{\sigma(1)}=0}^{\reg} \left(\cdots \ev_{z_{\sigma(k)}=0}^{\reg}(f(z_1, \cdots, z_k)\right) 
  \]
  built from iterated regularised evaluators $e_j\coloneq \ev_{z_{j}=0}^{\reg}$ which take the finite part at zero of the germ $f_j\in \calm(\C)$ obtained from fixing all the variables but the $j$-th one. Speer shows that the  map 
  \[
     {\mathcal E}_k^F\coloneq \frac{1}{k!}\, \sum_{\sigma \in \Sigma_k}\mathcal E_k^\sigma, \quad k\in \N, 
  \]
  defines a bilinear map 
  on $\top_{\newLa^F} $ with the following properties
  \begin{enumerate}
   \item ${\mathcal E}_k^F$ is (partially) multiplicative in the following sense
    \begin{equation}\label{eq:multF}f_1 \,\top_k\, f_2\Longrightarrow {\mathcal E}_k^F(f_1\cdot f_2)= {\mathcal E}_k^F(f_1)\cdot {\mathcal E}_k^F(f_2), 
    \end{equation}
   \item the restriction of ${\mathcal E}_k^F$ to holomorphic germs $\RHolok$ at zero coincides with the evaluation $\ev_0\colon f\longmapsto f(0)$ at zero,
   \item ${\mathcal E}_k^F$ is compatible with the filtration ${\mathcal E}_{k+1}^F\vert_{\Mero_{\newLa^F}(\C^k)}= {\mathcal E}_k^F$.
   \item  ${\mathcal E}_k^F$ is continuous. Explicitly (see the above lemma), for a sequence $(f_n)_{n\in \N}$ in $\Mero_{\newLa^F}(\C^k)$ such that there are linear forms $L_m: \C^k\to \C, m=1, \cdots, M$ for which the  functions $g_n\coloneq \prod_{m=1}^M L_m(z_1,\cdots, z_k)\, f_n, n\in \N $ lie in $\RHolok$ and converge to some function $g$ in $\RHolok$, we have 
    \[
       {\mathcal E}_k^F(f_n)\underset{n\to \infty}{\longrightarrow} {\mathcal E}_k^F\left(\frac{g}{\prod_{m=1}^M L_m(z_1,\cdots, z_k)} \right). 
    \]
    
  \end{enumerate}
  \begin{rem}For the canonical inner product $Q$ on $\C^\N$ we have 
    \[
       f_1\top_k f_2\Longrightarrow f_1\perp^{Q_k}f_2
    \]
    so that Speer's multiplicativity condition (\ref{eq:multF}) is more stringent than (\ref{eq:multFQ}).  Thus, the generalised evaluator ${\mathcal E}^F$ does not define a generalised $Q$-evaluator on $\Mero_{\newLa^F}(\C^\N)$. 
  \end{rem}
  
  \appendix 
  
 \section{Proof details for Section \ref{sect:meroCk}}\label{App:proofdetails}
  In this appendix we provide the proof for Lemma \ref{la:dep_and_silva_convergence} and Lemma \ref{lem:splittingoff} . 
  
  \subsection{Proof of Lemma \ref{la:dep_and_silva_convergence}}
  Recall the statement of the Lemma: \emph{Fix $k\in\N$ and assume that $f_1,f_2,\ldots $ is a sequence of elements in $\RMerok$ which converges to $f$ in $\RMerok$. Then for any $\ell$ in $\Dep(f)$, there is a subsequence $(f_{m_j})_{j\in \N}$ and a sequence $\ell_{m_j}$ in $ \Dep(f_{m_j}), j\in \N$ which converges to $\ell$.}
  
  \begin{proof}[Proof of Lemma \ref{la:dep_and_silva_convergence}]
    For $f$ and $\ell$ and a sequence as in the statement of the Lemma let us build a subsequence of $(f_m)_{m\in \mathbb N}$ together with the sequence $\ell_m\in \Dep(f_m)$.\medskip
    
    \textbf{Step 1: The independence subspaces} $\Indep(f_m)$ and $\Indep(f)$.\\ 
    By Corollary \ref{cor:silva_and_uniform_convergence} there is an open convex set $U\subseteq\C^k\setminus\{0\}$ such that each $f_m$ is bounded and holomorphic on $U$ and the sequence $f_m|_U$ converges uniformly to $f|_U\in\BHol(U)$. With a slight abuse of notation we denote by $f_m$ and $f$ again $f_m|_U$ and $f|_U$, respectively.
    Note that the (in)dependence subspaces of $f_m$ and $f$ do not depend on $U$.\medskip
    
    \textbf{Step 2: An orthonormal basis for $\Indep(f)$.}\\
    For each $m\in\N$ the dimension $d_m\coloneq \dim \Indep(f_m)$ of the space $\Indep(f_m)$ is a number in the finite set $\{0,\ldots,k\}$. Hence, after chosing a subsequence, we may assume without loss of generality that $d\coloneq \dim \Indep(f_m)$ is constant.
    For each $m\in\N$ we may chose a $Q$-orthonormal basis
    \[
       B_m\coloneq \left( b_1^{(m)},\ldots, 
       b_k^{(m)}    \right)
    \]
    of $\C^k$ such that the first $d$ vectors form a $Q$-orthonormal basis of $\Indep(f_m)$. The sequence $(B_m)_m$ can be regarded as a bounded sequence in the finite-dimensional space $(\C^k)^k$ and therefore has a converging subsequence which we will again denote by $(B_m)_m$ to keep the notation relatively simple.
    
    The defining properties of a $Q$-orthonormal basis are stable under limits (due to the continuity of $Q$), whence
    \[
       B=\left(b_1,\ldots, b_k\right)
       \coloneq \lim_{m\to\infty} B_m
       = \left( \lim_{m\to\infty} b_1^{(m)},\ldots, \lim_{m\to\infty} b_k^{(m)}    \right)
    \]
    is also a $Q$-orthonormal basis of $\C^k$. By the definition of the independent subspaces (see Eq. (\ref{eq:indspace})), for every $m\in\N$ and $j\leq d$ we have $D_{b_j^{(m)}}f_m=0$.
    Since differentiation is continuous with respect to the topology of uniform convergence on holomorphic functions, we infer $D_{b_j}f=0$ and $b_j\in\Indep(f)$ for all $j\leq d$.
    This shows that $\mathrm{span}(b_1,\ldots,b_d)\subseteq\Indep(f)$, from which it follows that $(b_1,\ldots,b_d)$ is an orthonormal basis for $\Indep(f)$.\medskip
    
    \textbf{Step 3: Approximating $\ell$ in $\Dep(f)$.}\\ 
    By Riesz' theorem, there is a vector $v\in\C^k$ such that $\ell =Q(\cdot,v)$. Since $\Dep(f)$ is the annulator of $\Indep(f)$, we obtain:
    \[
       v\in \Indep(f)^\perp \subseteq \mathrm{span}(b_1,\ldots,b_d)^\perp = \mathrm{span}(b_{d+1},\ldots, b_k).
    \]
    Thus there exist scalars $\alpha_{d+1},\ldots,\alpha_k\in\C$ such that
    $
    v = \sum_{j=d+1}^k \alpha_j b_j.
    $
    For each $m\in\N$ we define
    $
    v_m\coloneq \sum_{j=d+1}^k \alpha_j b_j^{(m)}\in\Indep(f_m)^\perp
    $
    and obtain a sequence converging to $v$. Since each $v_m$ is $Q$-orthogonal to $\Indep(f_m)$, it corresponds to an element $\ell_m$ in $\Dep(f_m)$ using Riesz' representation theorem. We have therefore built a sequence $(\ell_m)_{m\in \N}$ in the dual space $(\C^k)^*$ which converges to $\ell$ as desired.
  \end{proof}
  
  \subsection{Proof of Lemma \ref{lem:splittingoff}}
  
  Recall the statement of the Lemma: \emph{Given $f\in \BHol^\C (B_{1/n}(\C^k)) $, $L \in \newLa$ and $n \in \N$ there is an integer $m > n$ such that the maps $h:= f\circ \pr_L|_{B_{1/m}(\C^k)} $ and $g=\left.\left(f- f\circ\pr_L\right) / L\right|_{B_{1/m}(\C^k)}$ are bounded holomorphic functions on $B_{1/m}(\C^k)$ and the associated mapping  
  \[
     \theta_{n,m,Q}^L \colon \BHol^\C (B_{1/n}(\C^k)) \rightarrow \BHol^\C (B_{1/m}(\C^k))^2,\quad f= L\cdot g + h \mapsto (g,h),
  \]
  is continuous linear.}
  
  \begin{proof}[Proof of Lemma \ref{lem:splittingoff}]	
    To distinguish supremum norms we write
    \[
       \lVert f \rVert_{R, \infty} \coloneq \sup_{z\in B_{R}(\C^k)} |f(z)| \text{ for } R>0.
    \]
    Pick now $f $ in $ \BHol^\C (B_{1/n}(\C^k))$ and define $h = f \circ \pr_L|_{B_{1/n}(\C^k)}$. Clearly $h$ is holomorphic and bounded by 
    \begin{align}\label{hestimate}
      \lVert h\rVert_{1/n,\infty} \leq \lVert f \rVert_{1/n,\infty}.
    \end{align}
    By definition $f-f\circ \pr_L$ takes values in the closed linear subspace 
    \[
       V_L \coloneq \{g \in \BHol^\C (B_{1/n}(\C^k))\mid g|_{\ker L} \equiv 0 \}.
    \]
    Now the subtraction map $s_L \colon \BHol^\C(B_{1/n}(\C^k)) \rightarrow V_L, s_L(f) = f - f\circ \pr_L \in V_L$, is continuous linear since $\lVert s_L(f)\rVert_{1/n,\infty} \leq 2\lVert f\rVert_{1/n,\infty}$ by \eqref{hestimate}. \medskip{}
    
    \textbf{Step 1:} For $\kappa\in V_L$, $z \mapsto \kappa(z)/L(z)$ is holomorphic on $B_{1/(n+1)}(\C^k)$.\\
    The map $L$ is a linear form on $\C^k$, whence the Riesz representation theorem yields $v \in \C^k$ with $L(x) = Q(x,v)$. 
    Since $L$ is continuous we can pick $0 < s < 1/n$ such that for $z \in B_{s}(\C^k)$ the $Q$-orthogonal decomposition $z = \pr_L (z) + (L(z)/\lVert v\rVert^2)v$ satisfies $\lVert \pr_L(z)\rVert + |L(z)|/\lVert v\rVert < 1/n$. Hence we may apply for every $z$ in $ B_s(\C^k)$ and $\kappa $ in $ V_L$ the mean value theorem:
    \begin{align*}
      \kappa(z) & = \kappa(\pr_L (z) + (L(z)/\lVert v\rVert^2)v) -\kappa(\pr_L (z)) \\ &= \int_0^1 d\kappa (\pr_L (z)+\lambda (L(z)/\lVert v\rVert^2)v;(L(z)/\lVert v\rVert^2)v) \mathrm{d}\lambda \\ &= L(z) \int_0^1 d\kappa (\pr_L (z)+\lambda (L(z)/\lVert v\rVert^2)v;v/\lVert v\rVert^2)\mathrm{d}\lambda
    \end{align*}
    hence $\kappa(z)/L(z)$ makes sense as a holomorphic mapping on $B_{s}(\C^k)$ and is clearly bounded on $B_{R}(\C^k)$ for every $R<s$. Thus it makes sense to define for $R <s$ the map 
    \begin{equation}\label{eq:deltaR}\delta_R \colon V_L \rightarrow \BHol^\C (B_{R}(\C^k)),  \kappa \mapsto \kappa /L|_{B_{R}(\C^k)}.
    \end{equation}
    
    \textbf{Step 2:} The map $\delta_R(\kappa)\coloneq \kappa / L|_{B_{R}(\C^k)}$ is continuous for a suitable $R$.     
    let $f \in V_L$ and write $f = L\cdot g$, i.e.\ $g|_{B_{R}(\C^k)} = \delta_R (f)$. Let now $z \in \C^k$ with $\lVert z\rVert =r <s$ for the $s$ as in Step 1. As the projection has operator norm $1$, we have by construction that $\lVert\pr_L (z)\rVert \leq r < 1/n$. Setting $\varepsilon_r = \lVert v\rVert(1/n - r)$ a quick computation yields $\lVert \pr_L (z)+(\lambda/\lVert v\rVert^2) v\rVert \leq 1/n$ for $|\lambda |< \varepsilon_r$. Hence it makes sense to define the following holomorphic function of one variable
    \[
       \phi_z \colon \{\lambda \in \C \mid | \lambda| < \varepsilon_r\} \rightarrow \C , \quad \lambda \mapsto g(\pr_L (z) + (\lambda /\lVert v\rVert^2) v).
    \]
    From \eqref{Restimate} in \Cref{la:oneD} we infer that 
    $
    \sup_{|\lambda| < \varepsilon_r} |\lambda \phi_z(\lambda)| = \varepsilon_r\lVert \phi_z\rVert_{\varepsilon_r,\infty}.
    $
    Let us assume that $\lVert z\rVert \leq r < s$. Then from $\lVert z-\pr_L (z)\rVert =|L(z)|/\lVert v\rVert$ we deduce that $|L(z)| \leq 2r\lVert v\rVert$. 
    As the constant $\varepsilon_r$ is growing for smaller $r$, we can choose $m \in \N$ such that $R := 1/m < \min\{s,\varepsilon_R, \frac{\varepsilon_r}{2\lVert v\rVert}\}$. Summing up this yields the estimate
    \begin{align*}
      | g(z)| = |\phi_z(L(z))| \leq \frac{\varepsilon_r}{\varepsilon_r} \lVert \phi_z\rVert_{\varepsilon_r,\infty} = \frac{1}{\varepsilon_R} \sup_{|\lambda| \leq \varepsilon_r}| \lambda \phi_z(\lambda)|
      \leq \frac{\lVert L \cdot g\rVert_{1/n,\infty}}{1/n-s}.
    \end{align*}
    As $z$ was arbitrary with $\lVert z \rVert < 1/m$, we infer that 
    \[
       \lVert g\rVert_{1/m,\infty} \leq \frac{\lVert L \cdot g\rVert_{1/n,\infty}}{1/n-s} =  \frac{\lVert f\rVert_{1/n,\infty}}{1/n-s}.
    \]
    Hence the map $\delta_R$ defined in (\ref{eq:deltaR}) is continuous for every $R \leq 1/m$.\medskip
    
    \textbf{Step 3:} $\theta_{n,Q}^L$ is continuous linear. 
    First recall that the restriction map $r^n_m \colon \BHol^\C (B_{1/n}(\C^k)) \rightarrow \BHol^\C(B_{1/m}(\C^k))$ is continuous linear. We can then write $\theta_{n,Q}^L = (\delta_{1/m} \circ s_L, r^n_m -L|_{B_{1/m}(\C^k)} \cdot \delta_{1/m} \circ s_L)$. This mapping makes sense and the first component is continuous by Step 1 and 2. Exploiting that $\BHol^\C(B_{1/m}(\C^k))$ is a Banach algebra with multiplication given by the pointwise multiplication of functions, Step 1-2 show that $\theta^L_{n,Q}$ is indeed continuous.
  \end{proof}

    \textbf{Acknowledgement}
  We are grateful to the late Berit Stens\o{}nes for insightful comments on holomorphic functions of several variables. Furthermore, we thank Li Guo for his very useful comments on a preliminary version of the paper. We thank the anonymous referee for insightful comments on a preliminary version of this work.
  A.S. thanks Nord university in Levanger, where he was employed while part of the present work was conducted.
  \medskip
  
  \textbf{Authors' information}\\
  R. Dahmen, Karlsruhe Institute of Technology, Germany, \href{mailto:rafael.dahmen@kit.edu}{rafael.dahmen@kit.edu}  \\[.25em]
  \noindent
  S. Paycha, Potsdam University, Germany, \href{mailto:paycha@math.uni-potsdam.de}{paycha@math.uni-potsdam.de} \\[.25em]
  \noindent
  A. Schmeding, NTNU Trondheim, Norway, \href{mailto:alexander.schmeding@ntnu.no}{alexander.schmeding@ntnu.no} 
  \bibliography{meromult}
  %

  
\end{document}